\documentclass{article}
\usepackage{amsmath,amsfonts}
\DeclareMathSymbol{\blacksquare}  {\mathord}{AMSa}{"04}

\newcommand{\cl}{{\mathcal Cl}}

\newtheorem{theorem}{Theorem}[section]

\newtheorem{lemma}{Lemma}[section]
\newtheorem{example}{Example}[section]

\newtheorem{definition}{Definition}[section]

\newcommand{\BC}{{\rm \kern.24em \vrule width.05em height1.4ex depth-.05ex
            \kern-.26em C}}
\newcommand{\BR}{{\rm\hskip 0.1pt I\hskip -2.15pt R}}

\renewcommand{\a}{ {\bf a}}
\newcommand{\vv}{{\bf v}}
\newcommand{\e}{{\bf e}}

\def\BC{\mathbb{C}}

\def\BR{\mathbb{R}}

\def\BN{\mathbb{N}}
\evensidemargin-4.5mm
\parindent 0mm
\parskip 1.5ex plus 0.5ex minus 0.5ex
\pagestyle{plain}

\newcommand{\proof}{{\bf Proof: }}
 \textheight23.cm \textwidth16.0cm \topmargin-2.0cm
\oddsidemargin-0.00cm
\date{}

\makeatletter
\def\proof{\@ifnextchar[{\@proofa}{\@proofb}}
\def\@proofb{\textbf{Proof:} }
\def\@proofa[#1]{\textbf{Proof #1:} }
\makeatother

\begin{document}

\title{Almansi Theorems in Umbral Clifford Analysis and the Quantum Harmonic Oscillator}
\author
 {
 Guangbin Ren
\thanks {
Research supported by the \textit{Unidade de Investiga\c c\~ao
``Matem\'atica e Aplica\c c\~oes''} of University of Aveiro, and by
the NNSF  of China
 (No. 10771201), NCET-05-0539.
}
\\
   University of Science and Technology of China,
 Department
\\ of Mathematics, Hefei, Anhui 230026, P. R. China
\\
current address: Universidade de Aveiro, Departamento de
Matem\'atica
\\ P-3810-193 Aveiro, Portugal
\\E-mail:  ren@ua.pt
\and Nelson Faustino\thanks{Research supported by {\it Funda\c c\~ao para a Ci\^encia e a
Tecnologia} under the grant SFRH/BD/17657/2004}
\\ Universidade de Aveiro, Departamento de
Matem\'atica
\\ P-3810-193 Aveiro, Portugal
\\E-mail:  nfaust@ua.pt
}

 \maketitle

\newpage
\begin{center}
{\Large  \textbf{Almansi Theorems in Umbral Clifford Analysis and
the Quantum Harmonic Oscillator}}

\vskip 1.0truecm

\textbf{MOS subject classification:} 30$\,$G$\,$35, 35$\,$C$\,$10,
39$\,$A$\,$12, 70$\,$H$\,$05

\textbf{PACS 2006:} 02.30.$F_n$, 03.65.-$w$
\end{center}

\vskip 2.0truecm

\begin{abstract}
We introduce the Umbral calculus into Clifford analysis starting
from  the abstract of the Heisenberg commutation relation
$[\frac{d}{dx}, x] = {\bf id}$. The Umbral Clifford analysis
provides an effective framework in continuity and discreteness.

In this paper we consider functions defined in a star-like domain
$\Omega \subset \BR^n$ with values in the Umbral Clifford algebra
$C\ell_{0,n}'$ which are Umbral polymonogenic with respect to the
(left) Umbral Dirac operator $D'$, i.e. they belong to the kernel of
$(D')^k$. We prove that any polymonogenic function $f$ has a
decomposition of the form
$$f=f_1+ x'f_2 + \cdots + (x')^{k-1}f_k,$$
where $x'=x'_1e_1 + \cdots + x'_ne_n$ and $f_j,\;j=1,\ldots ,k,$ are
Umbral monogenic functions. As examples, this result recoveries  the
continuous version of the classical Almansi theorem for derivatives
and establishes the discrete version of Almansi theorem for
difference operator. The approach  also provides a similar result in
quantum field about Almansi decomposition related to Hamilton
operators.

Some concrete examples will presented for the discrete analog version of Almansi Decomposition
and for the quantum harmonic oscillator.

\end{abstract}

\textbf{ Keywords:} Umbral Dirac operator, Almansi's decomposition
theorem,
Umbral polymonogenic functions, Quantum Harmonic Oscillator.\\

\newpage

\section{Introduction}

We introduce the Umbral Calculus into the Clifford analysis. This
provides  a framework unifying continuity and discreteness. We
verifies this phenomena by establishing the Almansi decomposition in
Umbral Clifford analysis.

The essence behind continuity and  discreteness is Umbral Calculus.
Umbral Calculus has many applications in combinatorics, special
function theory, approximation theory, statistics, probability,
topology and physics (see \cite{BLR98}).  Umbral calculus
(\cite{roman84, RR}) can be viewed as an abstract theory of the
Heisenberg commutation relation $[P, M] = {\bf id}$. In quantum
mechanics $P$ is the derivative and $M$ the coordinate operator. The
unification of the Umbral structure can be seen  by taking the
derivative $P$ to be the usual derivative and the divided
difference.

 In the Umbral Clifford analysis, we take the delta operator $O_{x_j}$ as momentum operators
and take
 $x_j'=\frac{1}{2}\left(x_j{O'}_{x_j}^{-1}+{O'}_{x_j}^{-1}x_j\right)$ as position operators, where
 ${O'}_{x_j}f(\underline{x})=O_{x_j}(x_jf(\underline{x}))-x_jO_{x_j}f(\underline{x})$ is the Pincherle derivative.
They  satisfy the Heisenberg-Weyl relations
$$[O_{x_j}, O_{x_k}]=0=[x_j', x_k'], \qquad [O_{x_j},
x_k']=\delta_{jk}{\bf id}.$$

The Umbral Clifford analysis is a function theory for null solutions for
the Umbral Dirac operator
$$D':=\sum_{j=1}^n \e_j O_{x_j}.$$

where $\e_1,\ldots, \e_n$ are the Clifford Algebra generators subject to the anti-commuting
relations
$$\e_j\e_k+\e_k\e_j=-2\delta_{jk}.$$

By taking  $O_{x_j}$ to be the partial derivative
$\frac{\partial}{\partial x_j}$, we recovery the standard Dirac operator $D=\sum_{j=1}^n \e_j \partial_{x_j}$. The Clifford
analysis is a refinement of harmonic analysis and a generalization
of complex analysis to higher dimensions (see \cite{BDS,DSS,GM,
GS}). In the spherical harmonics, the Fischer decomposition of
polynomials plays a central role. The Almansi decomposition
generalized the Fischer decomposition from polynomials to
polyharmonic functions. The polyhamonic theory has many applications
 in
elasticity \cite{LV},
  radar imaging \cite{AEG},
and approximation  \cite{AGH, K}.

\medskip

\textbf{Almansi's Theorem} (cf. \cite{ALM,ACL}) {\em {If $f$ is
polyharmonic of degree $k$ in a starlike domain with center $0$,
then there exist uniquely defined functions $f_1, \cdots,f_{k}$,
each harmonic in $\Omega$ such that
$$f(x)=f_1(x)+|x|^2 f_2(x)+\cdots+ |x|^{2(k-1)}f_{k}(x).$$}}

One can find important applications and generalizations of this
result in the case of several complex variables in the monograph of
\textsc{Aronszajn, Creese and Lipkin,} \cite{ACL}, e.g. concerning
functions holomorphic in the neighborhood of the origin in $\BC^n$.
More recent generalizations of Almansi's Theorem can be found in
\cite{CCGS, R_AD, R_U, MAGU}.

In $\cite{MR02}$, Malonek and Ren obtained the Almansi decomposition
in $\cl_{0,n}$. In this paper, we establish  the Almansi
decomposition  in the setting of  Umbral Clifford analysis. As a
corollary obtained a discrete version  of Almansi decomposition. The
approach  also provides a similar result in quantum field about
Almansi decomposition related to Hamilton operators.

\section{Umbral calculus revised}

\subsection{Shift-Invariant Operators}

Let $\underline{x}=(x_1,x_2,\ldots,x_n)\in \BR^n$. A monomial over
$\underline{x}$ is the product
$\underline{x}^{\alpha}=x_1^{\alpha_1}x_2^{\alpha_2}\ldots
x_n^{\alpha_n}$. Here and elsewhere $\alpha,\beta,\ldots$ denote
indices of nonnegative integers. A {\it polynomial} is a finite
linear combination of monomials. We denote the ring of polynomials
over $\underline{x}$ by $\BR[\underline{x}]$,
$\partial_{x_j}:=\frac{\partial}{\partial_{x_j}}$ the partial
derivative with respect to $x_j$ and
$\partial_{\underline{x}}:=(\partial_{x_1},\partial_{x_2},\ldots,\partial_{x_n})$
the gradient operator.

In the sequel, it will be helpful to introduce the shorthand
notations
\begin{eqnarray*}
 \partial^{\alpha}_{\underline{x}}:=\partial_{x_1}^{\alpha_1}\partial_{x_2}^{\alpha_2}\ldots\partial_{x_n}^{\alpha_n}, \\ \alpha!=\alpha_1! \alpha_2 !\ldots \alpha_n! , \\
 \left(\begin{array}{cc} \beta \\ \alpha \end{array}\right)=\frac{\beta!}{\alpha ! (\beta-\alpha)!}, \\
 |\alpha|=\sum_{j=1}^n\alpha_j.
\end{eqnarray*}

Set $\underline{y}\cdot \partial_{\underline{x}}=\sum_{j=1}^n y_j
\partial_{x_j}$ and define
the shift operator
$$\tau_{\underline{y}}f(x)=\exp(\underline{y} \cdot
\partial_{\underline{x}})f(\underline{x})=f(\underline{x}+\underline{y}).$$

 An operator $Q$ is shift-invariant if it commutes with the shift
operator $\tau_{\underline{y}}$, when acting on
$\BR[\underline{x}]$, i.e.
\begin{eqnarray}
\label{shiftinv}[Q,\tau_{\underline{y}}]p(\underline{x})=0
\end{eqnarray}
 holds for all $p \in \BR[\underline{x}]$ and $\underline{y}\in
 \BR^n$, where $[{\bf a},{\bf b}]={\bf a}{\bf b}-{\bf b}{\bf a}$
 denotes the commutator bracket between ${\bf a}$ and ${\bf b}$.
In the language of co-algebra, the shift operator
$\tau_{\underline{y}}$ is nothing else than the co-product
 in a Hopf algebra over $\BR[\underline{x}]$, but we shall
not discuss this here, see e.g. \cite{rota79} (Section V.3) for more
details.

Furthermore the following multivariate expansion formula holds
\cite{BL96}:

\begin{theorem} \label{expThm}
A linear operator $Q: \BR[\underline{x}]\rightarrow \BR[\underline{x}]$ is shift-invariant if and only if it can be expressed (as a convergent series) in the gradient $\partial_{\underline{x}}$, that is
$$Q=\sum_{|\alpha|=0}^{\infty}a_\alpha \partial_{\underline{x}}^{\alpha}$$ where $a_\alpha=\left[\frac{Q \underline{x}^\alpha}{\alpha!}\right]_{\underline{x}=\underline{0}}$.
\end{theorem}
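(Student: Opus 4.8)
The plan is to treat the two implications separately, concentrating essentially all the effort on the nontrivial (``only if'') direction. The device I would build everything on is a reconstruction formula: for a shift-invariant $Q$ and any $p\in\BR[\underline{x}]$, evaluating the translated polynomial at the origin gives
$$(Qp)(\underline{y})=\big[\tau_{\underline{y}}(Qp)\big]_{\underline{x}=\underline{0}}=\big[Q\,\tau_{\underline{y}}\,p\big]_{\underline{x}=\underline{0}},$$
where the first equality is just $\tau_{\underline{y}}g(\underline{0})=g(\underline{y})$ and the second is shift-invariance (\ref{shiftinv}). For fixed $\underline{y}$ the polynomial $\tau_{\underline{y}}p=p(\underline{x}+\underline{y})$ is a finite $\BR$-linear combination of the monomials $\underline{x}^{\alpha}$, so this formula shows that $Q$ is completely determined by the values of the evaluation functional $p\mapsto[Qp]_{\underline{x}=\underline{0}}$ on monomials. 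In particular I would record the uniqueness lemma: a shift-invariant operator $S$ with $[S\underline{x}^{\alpha}]_{\underline{x}=\underline{0}}=0$ for every $\alpha$ vanishes identically, since then $(Sp)(\underline{y})=[S\,\tau_{\underline{y}}p]_{\underline{x}=\underline{0}}=0$ for all $p$ and all $\underline{y}$.

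With the lemma available, I would set $R:=\sum_{|\alpha|=0}^{\infty}a_{\alpha}\,\partial_{\underline{x}}^{\alpha}$ with $a_{\alpha}$ as in the statement, observing that on any fixed polynomial the series truncates, so $R$ is well defined with no analytic convergence at stake (the ``convergent series'' is purely formal on $\BR[\underline{x}]$). The remaining task is to compare $Q$ and $R$ through the functional $[\,\cdot\,]_{\underline{x}=\underline{0}}$ on monomials. Here one uses the elementary identity $\big[\partial_{\underline{x}}^{\alpha}\underline{x}^{\beta}\big]_{\underline{x}=\underline{0}}=\alpha!\,\delta_{\alpha\beta}$, from which $[R\underline{x}^{\beta}]_{\underline{x}=\underline{0}}=a_{\beta}\,\beta!=[Q\underline{x}^{\beta}]_{\underline{x}=\underline{0}}$ follows directly from the defining formula for $a_{\beta}$. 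Applying the lemma to $S=Q-R$, which is shift-invariant as a difference of shift-invariant operators, gives $Q=R$ and completes this direction.

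The converse is the routine half: each $\partial_{x_j}$ commutes with $\tau_{\underline{y}}=\exp(\underline{y}\cdot\partial_{\underline{x}})$ because the partial derivatives commute with one another, so every $\partial_{\underline{x}}^{\alpha}$, and hence every locally finite $\BR$-linear combination $\sum_{\alpha}a_{\alpha}\partial_{\underline{x}}^{\alpha}$, is shift-invariant when applied to polynomials. The step I expect to carry the real weight is the reconstruction formula together with the uniqueness lemma: once one sees that shift-invariance collapses the entire operator onto its ``symbol at the origin'' $p\mapsto[Qp]_{\underline{x}=\underline{0}}$, what is left is only the bookkeeping identity for $\partial_{\underline{x}}^{\alpha}\underline{x}^{\beta}$ at the origin. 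The one point that must be watched is that every manipulation stays inside finite sums, which is guaranteed because we test the operators on polynomials throughout.
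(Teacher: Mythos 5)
Your proof is correct. Note that the paper itself does not prove this theorem --- it is quoted as a known multivariate expansion formula with a citation to Di Bucchianico and Loeb \cite{BL96} --- so there is no in-paper argument to compare against; your route (the reconstruction formula $(Qp)(\underline{y})=[Q\,\tau_{\underline{y}}p]_{\underline{x}=\underline{0}}$, the resulting uniqueness lemma, the duality $[\partial_{\underline{x}}^{\alpha}\underline{x}^{\beta}]_{\underline{x}=\underline{0}}=\alpha!\,\delta_{\alpha\beta}$, and the observation that the series is locally finite on polynomials) is exactly the standard umbral-calculus proof of this expansion theorem, and all the steps check out. The only cosmetic remark is that the shift-invariance of $R=\sum_{\alpha}a_{\alpha}\partial_{\underline{x}}^{\alpha}$, which you establish in the ``converse'' paragraph, is already needed when you apply the uniqueness lemma to $S=Q-R$; stating that half first would make the logical order cleaner, but nothing is missing.
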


We will denote by $\mathcal{U}$ the set of all shift-invariant
operators over $\BR[\underline{x}]$. This set, endowed with the
standard addition and composition between functions, forms an
$\BR-$algebra. In the language of umbral calculus, $\mathcal{U}$ is
usually named as the Umbral Algebra.

The Pincherle derivative of an operator $Q$ at the coordinate $x_j$ is defined as the commutator
\begin{eqnarray}
\label{Pincherle} Q'_{x_j}=[Q,x_j],
\end{eqnarray}
where on the right hand side of (\ref{Pincherle}), $x_j$ acts as a multiplication operator on $\BR[\underline{x}]$, i.e.
 $x_j: p(\underline{x})\mapsto x_jp(\underline{x})$.

According to Theorem \ref{expThm}, shift-invariant operators over
polynomials are expressed in terms of derivatives. In particular,
the multi-index derivatives $\partial_{\underline{x}}^{\alpha}$ and
linear combinations of there are shift-invariant.

Regardless the last viewpoint, one looks to shift-invariant
operators as convergent power series
$Q(\underline{x})=\sum_{|\alpha|=0}^{\infty}a_\alpha
{\underline{x}}^{\alpha}$, by replacing the vector variable
$\underline{x}$ by the gradient $\partial_{\underline{x}}$, namely
$\iota [Q(\underline{x})]=Q(\partial_{\underline{x}})$, where
$\iota:\BR[\underline{x}] \rightarrow \mathcal{U}$ is a mapping
between the algebra of polynomials and the umbral algebra. This
mapping is one-to-one and onto and therefore an algebra isomorphism.

The inverse image under $\iota:\BR[\underline{x}] \rightarrow
\mathcal{U}$ of a shift-invariant operator is known as its
indicator. As a further consequence,
$\partial_{x_j}Q(\underline{x})$ is the indicator of the Pincherle
derivative $Q'_{x_j}:=Q'_{x_j}(\partial_{\underline{x}})$ c.f.
\cite{roman84}. Therefore, the Pincherle derivative is a derivation
operation in the umbral algebra $\mathcal{U}$.

\subsection{Quantum Mechanics representation of Umbral Calculus} \label{quantumM}

The main purpose of this subsection is to gather a fully rigorous description of umbral calculus in terms of Boson calculus
associated to the second quantization. First we recall some basic facts on Quantum Field Theory.

The Heisenberg-Weyl algebra $\mathcal{F}$ is the free algebra
generated from the vacuum vector $\Phi$ by the $2n+1$ elements
$\a_1^-,\ldots,\a_n^-,\a_1^+,\ldots,\a_n^+,{\bf id}$ satisfying
$\a_j\Phi=0$, $j=1,\ldots,n$ and the commutation relations:
\begin{eqnarray}
\label{weylh}[\a_j^-,\a_k^-]=0=[\a_j^+,\a_k^+], & [\a_j^-,\a_k^+]=\delta_{jk}{\bf id}.
\end{eqnarray}

We now equip $\mathcal{F}$ with an euclidian inner product $\langle \cdot | \cdot \rangle$
such that $\langle \Phi|\Phi \rangle=1$ and the operators
$\a_j^+$ are adjoint to $\a_j^-$:
$$\langle \a_j^+ x|y \rangle=\langle x|\a_j^- y\rangle.$$

The later vector space $(\mathcal{F},\langle \cdot | \cdot \rangle)$ becomes the Bose algebra, where the
elements $\a_j^+$ and $\a_j^-$ (creation and annihilation operators, respectively)
forms a basis for the $n-$particle space.

The explicit expression from the basic vectors in the Fock spaces $\eta_\alpha \in \mathcal{F}$ can be derived from the
standard lemma of elementary Quantum Field Theory as the multi-index product of creation operators acting on the vacuum vector $\Phi$, i.e.
\begin{eqnarray}
\label{quantumV}\eta_\alpha=\prod_{j=1}^n(\a_j^+)^{\alpha_j}\Phi
\end{eqnarray}
and moreover, inner products between basic vector fields has the form $$\langle \eta_\alpha|\eta_\beta
\rangle =\alpha ! \delta_{\alpha,\beta}$$

In the following, we will describe the algebra of multivariate polynomials $\BR[\underline{x}]$ as a realization of the Bose algebra $\mathcal{F}$.

Among the infinity of possible representations of  the simplest one are the multiplication $x_j$ and the derivation operator $\partial_{x_j}$.

Now consider the multivariate monomials $\underline{x}^{\alpha} \in \BR[\underline{x}]$.
Evidently $x_j$ and $\partial_{x_j}$ acting on $\underline{x}^{\alpha}$ gives
\begin{eqnarray*}
x_j \underline{x}^{\alpha}&=&\underline{x}^{\alpha+\vv_j}, \\ \partial_{x_j}
\underline{x}^{\alpha}&=&\alpha_j\underline{x}^{\alpha-\vv_j}
\end{eqnarray*}

We now construct the umbral calculus through representations of
commutation relations (\ref{weylh})
 by position and momentum operators, $x_j'$ and $O_{x_j}$, respectively such that the action of
$x_j',O_{x_j}$ on certain polynomials $V_\alpha(\underline{x})$ is
analogous to the action of $x_j$ and $\partial_{x_j}$ on the monomials. More
specifically we shall search for $x_j'$, $O_{x_j}$ and their
associated polynomials $V_\alpha(\underline{x})$ that
satisfy
\begin{eqnarray}
\label{xQ}
x_j' V_{\alpha}(\underline{x})=V_{\alpha+\vv_j}(\underline{x}), & O_{x_j}V_{\alpha}(\underline{x})=\alpha_jV_{\alpha-\vv_j}(\underline{x}).
\end{eqnarray}

These operators can be immediately recognized as raising and
lowering operators acting on the polynomials $V_\alpha(\underline{x})$.

In the sequel, we assume that $O_{x_j}$ satisfy the shift-invariance property (\ref{shiftinv}) and $O_{x_j} (x_j)$ is a nonzero constant.

According to the umbral setting such an operator $O_{x_j}$ is called
a delta operator and such a polynomial sequence $\{
V_\alpha~:~\alpha \in \BN_0^n \}$ generated by (\ref{xQ}) is called
a {\it Sheffer set} for the multivariate delta operator
$O_{\underline{x}}=(O_{x_1},O_{x_2}, \ldots ,O_{x_n})$; the {\it
Sheffer set} $\{ V_\alpha~:~\alpha \in \BN_0^n \}$ is called {\it
basic} if $V_{\alpha}({\bf 0})=0$ and $V_{{\bf
0}}(\underline{x})={\bf 1}$. As a further consequence
$(O_{x_j}')^{-1}$ always exists and commute with $O_{x_j}$
(c.f.~\cite{roman84},~pp.~18).

Using the quantum field representation (\ref{quantumV}), the {\it basic} polynomials $V_\alpha(\underline{x})$ are obtained through the action of
$(\underline{x}')^{\alpha}:=\prod_{k=1}^n(x_k')^{\alpha_k}$
\begin{eqnarray}
\label{rodrigues}V_\alpha(\underline{x})=(\underline{x}')^{\alpha}{\bf 1},
\end{eqnarray}
which is known as the {\it Rodrigues formula}.

A further consequence of (\ref{xQ}) is the eigenproperty
\begin{eqnarray}
\label{eigenP}\sum_{j=1}^n x_j' O_{x_j} V_\alpha(\underline{x})=|\alpha|V_\alpha(\underline{x})
\end{eqnarray}
This fact provides a good motivation for calling
\begin{eqnarray}\label{umbralE}E'=\sum_{j=1}^n x_j' O_{x_j}\end{eqnarray}the umbral Euler operator,
i.e. an operator who measures the degree of $V_\alpha$;

The properties of Sheffer-type polynomials are naturally handled
within the so called {\it umbral calculus}. Incidentally we shall
study the Sheffer operator
\begin{eqnarray}\label{eq:1.41}
\Psi_{\underline{x}}: \underline{x}^\alpha \mapsto
V_\alpha(\underline{x}),
\end{eqnarray}
 where
$V_\alpha(\underline{x})$ is a Sheffer sequence. This linear
operator is obviously invertible and its inverse
$\Psi_{\underline{x}}^{-1}$ corresponds to
$\Psi_{\underline{x}}^{-1}: V_\alpha(\underline{x}) \mapsto
\underline{x}^\alpha$.

This correspondence naturally induces a correspondence between the operators $x_j,\partial_{x_j}$ and $O_{x_j},x_j'$. Indeed, according to (\ref{xQ})
the linear extension of
$\Psi_{\underline{x}}: \underline{x}^\alpha \mapsto V_\alpha(\underline{x})$ is an intertwining operator i.e.
\begin{eqnarray}
\label{intertwining}O_{x_j}\Psi_{\underline{x}} =\Psi_{\underline{x}} \partial_{x_j}, & x_j'\Psi_{\underline{x}} =\Psi_{\underline{x}} x_j.
\end{eqnarray}
and hence it intertwines the umbral Euler operator (\ref{umbralE}) and the classical Euler operator $E_{\underline{x}}=\sum_{j=1}^n x_j \partial_{x_j}$, i.e.
\begin{eqnarray}
\label{intertwiningE} E' \Psi_{\underline{x}}=\Psi_{\underline{x}}E.
\end{eqnarray}

Another important consequence of (\ref{intertwining}) is that $\Psi_{\underline{x}}$ preserves the Heisenberg-Weyl algebra relations (\ref{weylh}).

A formal power series is an infinite linear combination of
monomials. We will use the variables $\underline{x}$ and
$\underline{t}$ to denote formal power series whose coefficients are
polynomials and $$\underline{t} \cdot \underline{x}'=\sum_{j=1}^n
t_j x_j'$$ to denote the formal Euclidean inner product between
$\underline{t}$ and $\underline{x}'$.

An important consequence of the Heisenberg-Weyl representation (\ref{weylh}) concerns the exponential generating function
for Sheffer sequences $V_\alpha(\underline{x})$, which is defined by

\begin{eqnarray}\label{umbralExp}V(\underline{x},\underline{t})=\sum_{|\alpha|=0}^\infty
V_\alpha(\underline{x})\frac{\underline{t}^{\alpha}}{\alpha
!}=\sum_{k=0}^\infty \frac{1}{k!}(\underline{t} \cdot
\underline{x}')^k{\bf 1}=exp({\underline{t}\cdot
{\underline{x}'}}){\bf 1}.
\end{eqnarray}
The above formula corresponds to a formal power series in the variables $\underline{x}$ and
$\underline{t}$. According to (\ref{intertwining}) the exponential generating function (\ref{umbralExp})
may be characterized by means of $V(\underline{x},\underline{t})=\Psi_{\underline{x}}(\exp(\underline{t} \cdot \underline{x}))$.

Let us take a close look for the characterization of Sheffer sequences on the umbral algebra.

According to the Theorem \ref{expThm}, $\{ V_\alpha~:~\alpha \in \BN_0^n \}$ is a Sheffer sequence if there is a
set of multivariate polynomials $\{ O_j~:~j=1,\ldots,n\}$ such that
\begin{eqnarray}
\label{lowering}O_j(\partial_{\underline{x}})V_{\alpha}(\underline{x})=\alpha_jV_{\alpha-\vv_j}(\underline{x})
\end{eqnarray}
which are lowering operators. This characterization is not unique, i.e., there are a lot of Sheffer-type sequences satisfying equation (\ref{lowering}), for a
given $O_j(\underline{x})$. We can further classify them by postulating the existence of the associated raising operator. A general theorem
(c.f.~\cite{rota73,roman84})
states that a polynomial sequence $\{ V_\alpha~:~\alpha \in \BN^n_0\}$ satisfying the monomiality principle (\ref{xQ})
with a multi-variate operator $O_{\underline{x}}=(O_{x_1},O_{x_2},\ldots,O_{x_n})$  is uniquely determined by the polynomial vector-field
$O(\underline{x})=(O_{1}(\underline{x}),O_{2}(\underline{x}),\ldots,O_{n}(\underline{x}))$ such that
\begin{eqnarray}
O_j(\underline{0})=\underline{0}~,
&~~[\partial_{x_j}O_j(\underline{x})]_{\underline{x}=\underline{0}}
\neq 0
&\mbox{and}~~[\partial_{x_j}O_k(\underline{x})]_{\underline{x}=\underline{0}}
= 0~\mbox{for}~j \neq k.
\end{eqnarray}

The exponential generating function for $V_\alpha(\underline{x})$ is then equal to
\begin{eqnarray}
V(\underline{x},\underline{t})=\Psi_{\underline{x}}(\exp(\underline{t} \cdot \underline{x}))=\exp(\underline{x}\cdot {O}^{-1}(\underline{t}))
\end{eqnarray}
where ${O}^{-1}(\underline{t})
:=({O}^{-1}_1(\underline{t}),{O}^{-1}_2(\underline{t}),\ldots,{O}^{-1}_n(\underline{t}))$.

Furthermore their associated raising and lowering operators
associated to (\ref{xQ}) are given by
\begin{eqnarray}
\label{Oxj}O_{x_j}=O_j(\partial_{\underline{x}}), \\
\label{xj'} x_j'=x_j (O_{x_j}')^{-1}.
\end{eqnarray}

We would like to emphasize that there many possibilities still for constructing operators $x_j'$ having the raising property. In particular, it is also interesting to consider the special choice
\begin{eqnarray}
\label{xj'2} x_j'=\frac{1}{2}(x_j (O_{x_j}')^{-1}+(O_{x_j}')^{-1}x_j)
\end{eqnarray}

Since $(O_{x_j}')^{-1}$ commutes with $O_{x_j}$ (c.f.~\cite{roman84},~pp.~18), one can easily verify the Heisenberg-Weyl commutation relations (\ref{weylh}) for the operators $O_{x_j}$ and $x_j'$ defined in (\ref{Oxj}) and (\ref{xj'2}).

One of major reasons to consider (\ref{xj'2}) over (\ref{xj'}) is that under some special choices of $O_{x_j}$, the operators $O_{x_j}$ and $x_j'$ becomes symmetric and self-adjoint on a Hilbert space c.f. \cite{DHS96}.

Now we would like to point out some properties concerning the umbral operators (\ref{Oxj}) and (\ref{xj'}):

Since $O_{x_j}$ is a delta operator, the operators
$(O_{x_j}')^{-1}$ always exists and $O_{x_j}'$, $(O_{x_j}')^{-1}$ commute with $O_{x_j}$
(c.f.~\cite{roman84},~pp.~18), i.e.
\begin{eqnarray}
\label{weylh2}[O_{x_j}',O_{x_k}]=0=[(O_{x_j}')^{-1},O_{x_k}]
\end{eqnarray}
However the same in general does not hold between
$O_{x_j}'$, $(O_{x_j}')^{-1}$ and $x_j'$ defined {\it viz} formula (\ref{xj'}) (see Example~\ref{example1}).

We would like also to emphasize that there many possibilities still for constructing operators $x_j'$ having the raising property. In particular, it is also interesting to consider the special choice
\begin{eqnarray}
\label{xj'2} x_j'=\frac{1}{2}(x_j (O_{x_j}')^{-1}+(O_{x_j}')^{-1}x_j)
\end{eqnarray}
One can easily verify the Heisenberg-Weyl commutation relations (\ref{weylh}) for the operators $O_{x_j}$ and $x_j'$ defined in (\ref{Oxj}) and (\ref{xj'2}). On the other hand we also get the raising property for the operators (\ref{xj'2}). However like for (\ref{xj'}), the operators (\ref{xj'2}) does not commute with $O_{x_j}'$ and $(O_{x_j}')^{-1}$.

In brief, one of major reasons to consider (\ref{xj'2}) instead of (\ref{xj'}) is that under some special choices of $O_{x_j}$, the operators $O_{x_j}$ and $x_j'$ becomes symmetric and self-adjoint on a Hilbert space c.f. \cite{DHS96}.

Here there are some examples of so obtained representations of the Heisenberg-Weyl algebra:

\begin{example}[Forward/Backward differences]\label{example1}

Denote by $\partial_h^{\pm j}$ the forward/backward difference operators by
$$ \partial_{h}^{\pm j}=\pm \frac{\tau_{\pm h \vv_j}-{\bf id}}{h}=\pm \frac{1}{h}\left(\exp(\pm h\partial_{x_j})-{\bf id}\right).$$

These operators mutually commute when acting on functions, i.e.
\begin{eqnarray}
\label{annihilation}\partial_h^{\pm j}(\partial_h^{\pm k}f(\underline{x}))=\partial_h^{\pm k}(\partial_h^{\pm j}f(\underline{x})). \end{eqnarray}

and are interrelated with shifts $\tau_{\pm h \vv_j}$
\begin{eqnarray*}
\tau_{- h \vv_j}(\partial_h^{+j}f)(\underline{x})=(\partial_h^{-j}f)(\underline{x}), & \tau_{h \vv_j}(\partial_h^{-j}f)(\underline{x})=(\partial_h^{+j}f)(\underline{x}).
\end{eqnarray*}
Let us remark that the finite difference action $\partial_h^{\pm j}$
acting on functions satisfies the product rule
\begin{eqnarray}
\label{duality}\partial^{\pm j}_{h}(x_j f(\underline{x}))=\pm
\displaystyle\frac{(x_j\pm
h)f(\underline{x}+h\vv_j)-x_jf(\underline{x})}{h} = x_j
\partial^{\pm j}_{h}(x_j f(\underline{x}))+f(\underline{x}+h\vv_j).
\end{eqnarray}
and hence $(\partial^{\pm j}_{h})'=\tau_{\pm h\vv_j}$.
Replacing the coordinate function $x_j$ by the operators $x_j\tau_{\mp h\vv_j}$ , we establish
the duality between the finite difference
operators $\partial_{h}^{\pm j}$ and the ``formal'' coordinate
functions $x_j\tau_{\pm h \vv_j}$.

We also note that the operators $x_j\tau_{\pm h\vv_j}$ mutually commute, when acting on functions
\begin{eqnarray}
\label{creation}x_j\tau_{\pm h\vv_j}(x_k\tau_{\pm h\vv_k}f(\underline{x}))=x_k\tau_{\pm h\vv_k}(x_j\tau_{\pm h\vv_j}f(\underline{x})). \end{eqnarray} Furthermore, the commutative
relations (\ref{annihilation}),(\ref{creation}) together with the
duality relation (\ref{duality}) endow an algebraic representation
of the Heisenberg-Weyl (\ref{weylh}) algebra, where the
``formal'' coordinate functions $x_j\tau_{\mp h\vv_j}$ represent
``creation'' operators dual to the ``annihilation'' operators
$\partial_h^{\pm j}$.

Applying the Rodrigues formula (\ref{rodrigues}), we obtain the following multivariate polynomials $$(x)_{\pm}^{(\alpha)}=\prod_{j=1}^n
x_j(x_j\pm h)\ldots(x_j\pm(\alpha_j-1)h).$$

These polynomials are the basic polynomials that satisfy the equations
(\ref{xQ}). It is interesting to see that these multivariate
polynomials coincide with the multi-index factorial powers
$(x)_{\pm}^{(\alpha)}$ introduced by N.~Faustino and U.~K\"ahler in
\cite{FK07}.

Here we would like to emphasize that the operators $x_j\tau_{\pm h\vv_j}$ and $\tau_{\pm h\vv_j}$ does not commute when acting on functions, i.e.
\begin{eqnarray*}
\tau_{\pm h\vv_j}(x_k\tau_{\pm h\vv_k}f(\underline{x}))=\pm h \delta_{jk}f(\underline{x}\pm h(\vv_j+\vv_k))+x_k\tau_{\pm h\vv_k}(\tau_{\pm h\vv_j}f(\underline{x})), \\
\tau_{\mp h\vv_j}(x_k\tau_{\pm h\vv_k}f(\underline{x}))=\mp h \delta_{jk}f(\underline{x}\pm h(\vv_k-\vv_j))+x_k\tau_{\pm h\vv_k}(\tau_{\pm h\vv_j}f(\underline{x}))
\end{eqnarray*}

Another family of polynomials involving the position operator $x_j'$
defined in (\ref{xj'2}) can be considered. In particular, recursive
application of the Rodrigues formula (\ref{rodrigues}) yields the
following family of polynomials.
$$
\prod_{j=1}^n\frac{1}{2^{\alpha_j}}\left(x_j\tau_{\pm h\vv_j}+\tau_{\pm h\vv_j}x_j\right)^{\alpha_j}{\bf 1}
=\prod_{j=1}^n\left(x_j \pm \frac{1}{2}h\right)\left(x_j \pm \frac{3}{2}h\right)\ldots \left(x_j \pm \frac{2\alpha_j-1}{2}h\right).
$$
We will come back to the Heisenberg-Weyl character of the operators
$\partial_{h}^{\pm j}$ and $x_j'$ in Section \ref{umbralBridge}
when establishing the comparison between
 the Umbral version of Almansi-type decomposition and the Fischer Decomposition for Difference Dirac operators obtained in \cite{FK07}. 

\end{example}

\begin{example}[Central difference operator]\label{example2}

In lattice gauge theories it is common to consider the central finite difference operator (i.e. the average between the forward and backward differences)
$$O_{x_j}=\frac{1}{2}(\partial_h^{+j}+\partial_{h}^{-j})=\frac{1}{h}\sinh(h\partial_{x_j})$$
instead of the forward/backward differences $\partial_h^{\pm j}$.

One among many reasons behind this choice is that contrary to $\partial_{h}^{\pm j}$, $O_{x_j}$ are symmetric and self-adjoint on the Hilbert space $\ell_2(h\mathbb{Z}^n)$.

Furthermore the Pincherle derivative for $O_{x_j}$ corresponds to
$$ O_{x_j}'=\frac{\tau_{h \vv_j}+\tau_{-h \vv_j}}{2}=\cosh(h\partial_{x_j}).$$

It can be easily seen that the multivariate operator $O_{\underline{x}}=(O_{x_1},O_{x_2},\ldots,O_{x_n})$ is a delta operator so it admits a basic polynomial sequence satisfying the equations (\ref{xQ}) and that
$O_{x_j}'$ has an inverse since $O_{x_j}'1=1 \neq 0$.

Now it arises the question how to compute $(O_{x_j}')^{-1}$. Since the map $\iota:\BR[\underline{x}] \rightarrow \mathcal{U}$ is an an algebra isomorphism, the problem of finding an inverse for $O_{x_j}'$ it is enough to find an inverse for the series expansion
$$ \cosh(hx_j)=\sum_{k=0}^{\infty}\frac{1}{(2k)!}(hx_j)^{2k}$$

We however note that finding an inverse for $\cosh(hx_j)$ becomes increasingly
cumbersome. Nevertheless, there is a certain systematic in it.

For a sake of simplicity, we impose periodic boundary conditions on our lattice such that $\underline{x}+N\vv_j=\underline{x}$, i.e. $(\tau_{h \vv_j})^N={\bf id}$ for a certain $N \in \BN$.(i.e. a periodic lattice). In particular, when $N/2$ is odd, $(O_{x_j}')^{-1}$ corresponds to c.f. \cite{DHS96}
$$ (O_{x_j}')^{-1}=\sum_{k=0}^{N/2-1} (-1)^k (\tau_{h \vv_j})^k$$
while for $N$ odd one obtain
$$ (O_{x_j}')^{-1}=\sum_{k=0}^{(N-1)/2} (-1)^{k+(N-1)/2} (\tau_{h \vv_j})^{2k}+
\sum_{k=0}^{(N-1)/2-1} (-1)^{k} (\tau_{h \vv_j})^{2k+1}.$$
These inversion formulae will be of special interest on Section \ref{umbralBridge} when we consider the gauged version of the Harmonic oscillator.
\end{example}




\section{Almansi-type theorems in Umbral Clifford analysis}

\subsection{Umbral Clifford Analysis}\label{umbralCliffordAnalysis}

The main objective of this subsection is to introduce a well-adapted
Clifford setting.

In what follows we retain the notation adopted in subsection
\ref{quantumM}.
Additionally we will denote by $\cl_{0,n}$ the algebra of
signature $(0,n)$ whose generators $\e_1,\e_2,\ldots,\e_n$ satisfy
the graded deformed anti-commuting relations
\begin{eqnarray}\label{eq:1.42}
\label{CliffPinch}\{
\e_j,\e_k\}=-2\delta_{jk},
\end{eqnarray}
where $\{{\bf a},{\bf b}\}={\bf a}{\bf b}+{\bf b}{\bf a}$
 denotes the bracket between ${\bf a}$ and ${\bf b}$.

The following algebra is an algebra of radial type, i.e.
\begin{eqnarray}
\label{radial}\left\{~\left[\e_j,\e_k \right],\e_l~\right\}=0, &
\mbox{for all}~j,k,l=1,\ldots,n.
\end{eqnarray}
In the above definition there is indeed no a priori defined a linear
space to which the generators $\e_j$ belong. Nevertheless, by only
using (\ref{radial}) one can already deduce many properties \cite{Sommen97}.

Analogously to \cite{Sommen97}, we define Umbral Clifford analysis
as the study of the operators belonging to the algebra of
differential operators
\begin{eqnarray}\label{eq:1.43}
\mbox{Alg}\left\{
x_j',O_{x_j},\e_j~:~j=1,\ldots,n\right\},
\end{eqnarray}
where the operators $O_{x_j}$ and $x_j'$ are defined accordingly to (\ref{Oxj}) and (\ref{xj'})/(\ref{xj'2}), respectively.

 We define the umbral counterpart for the Dirac operator as
\begin{eqnarray}
\label{dirac}D'=\sum_{j=1}^n \e_j O_{x_j}.
\end{eqnarray}
and by
\begin{eqnarray}
\label{x'}x'=\sum_{j=1}^n \e_j x_j'
\end{eqnarray}
the umbral counterpart of the vector variable.

 The Heisenberg-Weyl character of the operators $x_j'$ and
$O_{x_j}$ together with the commutation relations (\ref{weylh2}), and the anti-commutation rule (\ref{CliffPinch})
implies that
\begin{eqnarray}
\label{xsquare}(x')^{2}=-\sum_{j=1}^n (x_j')^2, \\
\label{Dsquare}(D')^2=-\sum_{j=1}^n O_{x_j}^2.
\end{eqnarray}

The operator $(x')^2$ then corresponds to the generalization of the
norm squared of a vector in the Euclidean space since $|
x'|^2=-(x')^2{\bf 1}$ while the right hand side of (\ref{Dsquare})
corresponds to the umbral counterpart of the Laplacian, i.e.
$\Delta'=-(D')^2$, and this indeed what we expected from a "true"
coordinate variable and Dirac operator.

Bellow there are some examples of this type of operators:
\begin{example}
If $O_{x_j}=\partial_{x_j}$, we recover the standard Clifford Analysis since the umbral operators $D'$ and $x'$
 coincide with the standard Dirac and coordinate variable operators, respectively.
    Furthermore the {\it continuum} Hamiltonian $\frac{1}{2}\left(-\Delta+|\underline{x}|^2 \right)$
     can we rewritten as $$\frac{1}{2}\left(-\Delta+|\underline{x}|^2 \right)=\frac{1}{2}(D^2-x^2),$$
      where the quantity $\frac{1}{2}|\underline{x}|^2=-\frac{1}{2}x^2$ should be interpreted as a spherical symmetric potential operator.
\end{example}

\begin{example}\label{discreteClifford}
If $O_{x_j}=\partial_{h}^{+j}$, where $\partial_h^{+j}$ are the forward differences introduced on Example \ref{example1}, the corresponding operator $D'$ is given by
    \begin{eqnarray}
    \label{DiscreteDirac}D'=\sum_{j=1}^n \e_j \partial_h^{\pm j}.
    \end{eqnarray}
    The square of $(D')^2$ does not split the star laplacian
    \begin{eqnarray}
    \Delta' \neq \sum_{j=1}^n
\partial_{h}^{+j}\partial_{h}^{-j},
 \end{eqnarray}

\end{example}

\begin{example}\label{discreteClifford2}
If $O_{x_j}=\frac{1}{2}\left(\partial_{h}^{+j}+\partial_{h}^{-j}\right)$ are the central differences introduced on Example \ref{example2}, the corresponding operator $D'$ is given by
    \begin{eqnarray}
    \label{DiscreteDirac}D'=\frac{1}{2}\sum_{j=1}^n \e_j \left(\partial_{h}^{+j}+\partial_{h}^{-j}\right).
    \end{eqnarray}
    The square of $(D')^2$ split the star laplacian
    \begin{eqnarray}
    \label{starLapl}\Delta'=\sum_{j=1}^n
\partial_{2h}^{+j}\partial_{2h}^{-j},
 \end{eqnarray}
\end{example}

In Examples \ref{discreteClifford} and \ref{discreteClifford2}, the operator $-\frac{1}{2}(x')^2$ describes a spherical potencial of the lattice while the equation $-(x')^2{\bf 1}=r$ describes a lattice sphere on the $n-$dimensional ambient space.

We would like to point out that under the conditions of (\ref{example2}), the special choice (\ref{xj'2}) leads to spherical potentials on the lattice which possesses the symmetric property like in {\it continuum}. However the same does not holds under the conditions of Example \ref{example1}.

We will come back to this problem in Section \ref{umbralBridge} when we consider the gauged version of the harmonic oscillator.

\subsection{Decomposition of Umbral polymonogenic functions}


\begin{definition}
Let $\Omega$ be a domain in $\BR^n$ and $k\in \BN$.
 A function
$f: \Omega\longrightarrow \cl_{0, n}$ is \textsl{Umbral
~polymonogenic} of degree $k$ if $(D')^kf=0$. If $k=1$, it is called
{\it Umbral monogenic}~function.
\end{definition}

\begin{definition}
A domain $\Omega\subset\mathbf R^n$ is  a \textsl{starlike domain}
with center $0$ if with $x\in \Omega$ also  $tx\in\Omega$ holds
for any $0\le t \le 1$.
\end{definition}

Let $\Omega$ be a starlike domain with center $0$. For any $s>0$, we
define the operator  $I_s: C(\Omega, \cl_{0, n})\longrightarrow
C(\Omega, \cl_{0, n})$ by
\begin{equation}\label{eq1.11}
I_s f(x)=\int_0^1 f(tx) t^{s-1} dt.
\end{equation}

Recalling the definition of $\Psi_{\underline{x}}$ in
(\ref{eq:1.41}), we now introduce an operator
\begin{equation}\label{eq1.12}I_s'=\Psi_{\underline{x}} I_s \Psi_{\underline{x}}^{-1}.\end{equation}

For any $k\in\BN$, denote
 \begin{equation}\label{eq1.21}
Q_k'= \frac{1}{c_k} I_{\frac{n}{2}}' I_{\frac{n}{2}+1}'\cdots
I_{\frac{n}{2}+[\frac{k-1}{2}]}',
 \end{equation}
where $ c_k=(-2)^k [k/2]!.$

\begin{theorem}\label{th:01}
Let $\Omega$ be a starlike domain in $\BR^n$ with center $0$. If $f$
is a Umbral polymonogenic function in $\Omega$ of degree $k$, then
there exist unique functions $f_1, \ldots, f_k$, each Umbral
monogenic in $\Omega$, such that
 \begin{equation}\label{eq1.32}
 f(x)=f_1(x)+ x'f_2(x) + \cdots + (x')^{k-1}f_k(x).
\end{equation}
 Moreover the Umbral monogenic functions
$f_1, \ldots, f_k$ are given by the the following formulas:

\begin{equation}\label{eq1.33}
\begin{array}{rcr}
 f_k &=& Q_{k-1}' (D')^{k-1} f(x)
   \\
f_{k-1}&=& Q_{k-2}' (D')^{k-2}({\bf id}-(x')^{k-1}Q_{k-1}'
(D')^{k-1}) f(x)
    \\
& \vdots
 \\
f_2 &=& Q_1D'({\bf id}-(x')^2Q_2(D')^2)\cdots({\bf
id}-(x')^{k-1}Q_{k-1} (D')^{k-1})f(x)
  \\
f_1 &=& ({\bf id}-x' Q_1' D')({\bf id}-(x')^2Q_2'(D')^2)\cdots({\bf
id}-(x')^{k-1}Q_{k-1} (D')^{k-1}) f(x)\rlap.
\end{array}
\end{equation}

Conversely, the sum in  (\ref{eq1.32}), with $f_1, \ldots, f_k$
Umbral monogenic in $\Omega$, defines a Umbral polymonogenic
function in $\Omega$.
\end{theorem}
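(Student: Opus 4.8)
The plan is to reduce the statement to a short list of operator identities among $D'$, $x'$ and the umbral Euler operator $E'=\sum_{j=1}^n x_j' O_{x_j}$, and then to follow, step by step, the peeling that is already encoded in the formulas (\ref{eq1.33}). The underlying principle is that, because of the Heisenberg--Weyl relations $[O_{x_j},x_k']=\delta_{jk}{\bf id}$ and the Clifford relations $\{\e_j,\e_k\}=-2\delta_{jk}$, the umbral operators obey exactly the same commutation calculus as the classical Dirac operator and vector variable; equivalently, the intertwiner $\Psi_{\underline{x}}$ of (\ref{eq:1.41}) carries $D,x,E$ to $D',x',E'$ and $I_s$ to $I_s'$. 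First I would record the basic identities: a direct computation from (\ref{weylh}) and (\ref{CliffPinch}) gives $\{D',x'\}=-(2E'+n)$, together with $[E',x']=x'$, $[E',D']=-D'$ and $[D',\sum_j(x_j')^2]=2x'$; moreover $(x')^2=-\sum_j(x_j')^2$ is scalar and commutes with every $\e_k$ and with each $x_k'$. Using the eigenproperty (\ref{eigenP}) and $I_s'=\Psi_{\underline{x}}I_s\Psi_{\underline{x}}^{-1}$, I would also check $I_s'=(E'+s)^{-1}$ (the integral (\ref{eq1.11}) is a genuine two-sided inverse of $E+s$ for $s>0$ on a starlike domain), so that $Q_k'$ of (\ref{eq1.21}) equals $\frac{1}{c_k}\prod_{i=0}^{[(k-1)/2]}(E'+\frac n2+i)^{-1}$.

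The computational heart is a diagonalization formula for $(D')^m(x')^m$ on monogenic functions. Writing $g$ for any Umbral monogenic function, the identities above yield by induction $D'((x')^{2l}g)=-2l\,(x')^{2l-1}g$ and $D'((x')^{2l+1}g)=-(x')^{2l}(2E'+n+2l)g$; iterating these and tracking the even/odd alternation gives
$$(D')^m(x')^m g=(-2)^m\,[m/2]!\,\prod_{i=0}^{[(m-1)/2]}\Big(E'+\tfrac n2+i\Big)\,g=c_m\prod_{i=0}^{[(m-1)/2]}\Big(E'+\tfrac n2+i\Big)\,g.$$
Since $E'$ measures the degree of the basic polynomials $V_\alpha$ and hence has nonnegative spectrum on the monogenic class, each factor $E'+\frac n2+i$ is invertible there, and comparing with the previous paragraph shows precisely $Q_m'(D')^m(x')^m g=g$. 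In other words $Q_m'(D')^m$ inverts multiplication by $(x')^m$ on monogenic functions, which is exactly what the operators $Q_m'$ were built to do; this is the step I expect to require the most careful bookkeeping, since the constant $c_m=(-2)^m[m/2]!$ and the shifted product emerge only after separating the cases $m$ even and $m$ odd.

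With this in hand the decomposition follows by downward induction on the degree. Set $R_m=(x')^m Q_m'(D')^m$ and $f_k=Q_{k-1}'(D')^{k-1}f$. First I would verify that $f_k$ is monogenic: since $D'(E'+s)^{-1}=(E'+s+1)^{-1}D'$, the operator $D'$ pushes through $Q_{k-1}'$ up to a shift, so $D'f_k=\widetilde{Q}_{k-1}'(D')^kf=0$. Next, by the diagonalization with $m=k-1$, $(D')^{k-1}((x')^{k-1}f_k)=(Q_{k-1}')^{-1}f_k=(D')^{k-1}f$, whence $({\bf id}-R_{k-1})f=f-(x')^{k-1}f_k$ is annihilated by $(D')^{k-1}$, i.e.\ is Umbral polymonogenic of degree $k-1$. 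Repeating the construction on $({\bf id}-R_{k-1})f$ produces $f_{k-1}=Q_{k-2}'(D')^{k-2}({\bf id}-R_{k-1})f$, and so on, reproducing the formulas (\ref{eq1.33}); telescoping the successive remainders yields $f=f_1+x'f_2+\cdots+(x')^{k-1}f_k$ with every $f_j$ monogenic.

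Finally, uniqueness and the converse are immediate consequences of the same diagonalization. If $\sum_{j=1}^k (x')^{j-1}f_j=0$ with all $f_j$ monogenic, then applying $(D')^{k-1}$ kills every term with $j\le k-1$ (each needs only $j-1<k-1$ applications of $D'$ to reach a monogenic function, which the remaining factors of $D'$ then annihilate) and leaves $(Q_{k-1}')^{-1}f_k=0$, forcing $f_k=0$; induction on $k$ gives $f_1=\cdots=f_k=0$, and applying $Q_{j-1}'(D')^{j-1}$ to the decomposition recovers the stated formulas. The converse is the same count: for $f_j$ monogenic, $(D')^k((x')^{j-1}f_j)=0$ because $j-1<k$, so $(D')^kf=0$. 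The only genuine obstacle beyond the algebra is analytic, namely that $Q_m'$ be well defined on the functions produced; this is exactly the invertibility of $E'+\frac n2+i$ on monogenic functions on the starlike domain $\Omega$, which is guaranteed by the integral representation (\ref{eq1.11}) through $\Psi_{\underline{x}}$.
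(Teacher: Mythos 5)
Your proposal is correct and follows essentially the same route as the paper: the same commutation identities ($\{D',x'\}=-2E'_{n/2}$, $E'_sx'=x'E'_{s+1}$, $D'I'_s=I'_{s+1}D'$), the same key diagonalization $(D')^m(x')^m g=c_m\,E'_{\frac n2+[\frac{m-1}{2}]}\cdots E'_{\frac n2}\,g$ on Umbral monogenic $g$ proved by the identical even/odd induction, and the same peeling argument $H_k=H_{k-1}+(x')^{k-1}H_1$ with uniqueness and the converse obtained by applying powers of $D'$. No substantive differences to report.
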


\subsection{The proof of the Almansi Theorem}

Before proving Theorem \ref{th:01}, we need some lemmas.

Denote
\begin{equation}\label{eq1.88}
E_{s}=s{\bf id}+E.
\end{equation}
We write $E$ instead of $E_{0}$ when $s=0$.

\begin{lemma}\label{le:4.1}
(\cite{MR02}) Let $x\in \BR^n$ and $\Omega$ be a domain with
$\Omega\supset [0, x]$.  If $s>0$ and $f\in C^1(\Omega,
\cl_{0,n}')$, then
\begin{equation}\label{eq1.880} f(x)=I_s E_{s} f (x)= E_{s} I_s f(x).\end{equation}
\end{lemma}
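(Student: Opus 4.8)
The plan is to treat this identity as a purely scalar statement in the real variable $x$: the operators $I_s$ and $E_s=s\,{\bf id}+E$ act only on the $\BR^n$-variable and commute with the constant Clifford structure, so it suffices to prove the claim for a scalar $C^1$ function and then apply it componentwise to the $\cl_{0,n}'$-valued $f$ (writing $f=\sum_A f_A\e_A$, both $E$ and $I_s$ act entry-by-entry on the $f_A$). Throughout I would fix $x$ with $[0,x]\subset\Omega$, so that $t\mapsto f(tx)$ is a well-defined $C^1$ map on $[0,1]$; this is exactly what the hypotheses $\Omega\supset[0,x]$ and $f\in C^1$ guarantee.

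The engine of the argument is the elementary identity linking the Euler operator to the radial derivative, obtained from the chain rule,
$$\frac{d}{dt}f(tx)=\sum_{j=1}^n x_j(\partial_{x_j}f)(tx)=\frac{1}{t}(Ef)(tx).$$
To establish $I_sE_sf=f$ I would substitute $E_sf=sf+Ef$ into the integral defining $I_s$ and use this identity to write
$$I_sE_sf(x)=\int_0^1\big[s\,t^{s-1}f(tx)+t^s\tfrac{d}{dt}f(tx)\big]\,dt=\int_0^1\frac{d}{dt}\big[t^sf(tx)\big]\,dt,$$
recognizing the integrand as an exact derivative. Evaluating the primitive gives $f(x)-\lim_{t\to0^+}t^sf(tx)$, and since $s>0$ and $f$ is continuous near $0$ the boundary term vanishes, leaving $f(x)$. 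For the second equality $E_sI_sf=f$ I would differentiate under the integral sign to get $\partial_{x_j}I_sf(x)=\int_0^1(\partial_{x_j}f)(tx)t^s\,dt$; contracting with $x_j$, summing, and using the same radial identity yields $EI_sf=I_sEf$, i.e. $E$ commutes with $I_s$. Hence $E_sI_sf=sI_sf+I_sEf=I_s(sf+Ef)=I_sE_sf=f$ by the first part.

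The only genuinely delicate points are the two analytic justifications, both of which hinge on the standing hypotheses. The vanishing of the boundary term at $t=0$ is where $s>0$ is indispensable: the factor $t^s$ must absorb the merely bounded factor $f(tx)$, and the statement fails for $s\le 0$. Differentiation under the integral sign requires the integrand and its $x$-derivatives to be dominated uniformly in $t$ near the fixed $x$, which follows from $f\in C^1$ together with the compactness of $\{tx:t\in[0,1]\}\subset\Omega$. I expect no difficulty from the Clifford-valued setting itself, since $\cl_{0,n}'$ enters only as a finite-dimensional coefficient module on which every operation acts coordinatewise, so the scalar computation transfers verbatim.
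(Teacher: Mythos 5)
Your proof is correct. The paper itself gives no argument for this lemma --- it simply cites \cite{MR02} and remarks that the identity holds componentwise --- and your computation (reducing to scalar components, rewriting $t^{s-1}(E_sf)(tx)$ as the exact derivative $\frac{d}{dt}\bigl[t^sf(tx)\bigr]$, killing the boundary term at $t=0$ via $s>0$, and obtaining the second equality from $EI_s=I_sE$ by differentiation under the integral) is precisely the standard proof from the cited reference, with the analytic hypotheses used in exactly the right places.
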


Note that Lemma \ref{le:4.1}  remains true  in the setting of Umbral
Clifford analysis, since it holds componentwise.

Since $I_s'=\Psi_{\underline{x}} I_s \Psi_{\underline{x}}^{-1}$ and
$E'_s=\Psi_{\underline{x}} E_s \Psi_{\underline{x}}^{-1}$, it
follows from (\ref{eq1.880}) that
$$E'_sI_s'=I_s'E'_s={\bf id}.$$

\begin{lemma}\label{le:51}  Let $\Omega$ be a domain in $\BR^n$.
  If $f\in C^1(\Omega)$, then
\begin{equation}\label{eq3.1}
  D' E'_s f(x)=E'_{s+1} D' f (x), \qquad D' I'_s f(x)=I'_{s+1} D' f
  (x).
 \end{equation}
\end{lemma}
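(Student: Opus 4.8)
The plan is to read both formulas in (\ref{eq3.1}) as identities between operators acting on $C^1(\Omega,\cl_{0,n}')$ and to reduce both of them to the single algebraic relation $[D',E']=D'$. Indeed, since $E'_s=s\,{\bf id}+E'$ with $E'=\sum_{j=1}^n x_j'O_{x_j}$, the first identity $D'E'_s=E'_{s+1}D'$ is, after cancelling the common term $sD'$, exactly the assertion that $D'E'-E'D'=D'$. Once this is in hand, the second identity will follow formally from the fact, recorded just above the lemma, that $I'_s$ is a two-sided inverse of $E'_s$.

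For the commutator I would expand
\begin{equation*}
D'E'-E'D'=\sum_{j,k=1}^n\e_j\bigl(O_{x_j}\,x_k'O_{x_k}-x_k'O_{x_k}\,O_{x_j}\bigr),
\end{equation*}
where I have used that the Clifford generators $\e_j$ commute with the umbral operators $x_k',O_{x_k}$ (the latter act only on the scalar components). Writing $O_{x_j}x_k'=x_k'O_{x_j}+[O_{x_j},x_k']$ and invoking the Heisenberg--Weyl relations $[O_{x_j},x_k']=\delta_{jk}{\bf id}$ together with $[O_{x_j},O_{x_k}]=0$, the terms $x_k'O_{x_j}O_{x_k}$ and $x_k'O_{x_k}O_{x_j}$ cancel and only the diagonal contribution survives:
\begin{equation*}
D'E'-E'D'=\sum_{j,k=1}^n\e_j\,\delta_{jk}O_{x_k}=\sum_{j=1}^n\e_j O_{x_j}=D'.
\end{equation*}
Hence $D'E'_s=sD'+D'E'=(s+1)D'+E'D'=E'_{s+1}D'$, which is the first identity in (\ref{eq3.1}).

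To obtain the second identity I would exploit $E'_sI'_s=I'_sE'_s={\bf id}$, i.e. $I'_s=(E'_s)^{-1}$. Left-multiplying $D'E'_s=E'_{s+1}D'$ by $I'_{s+1}$ gives $I'_{s+1}D'E'_s=I'_{s+1}E'_{s+1}D'=D'$, and right-multiplying the result by $I'_s$ yields $I'_{s+1}D'=I'_{s+1}D'E'_sI'_s=D'I'_s$, which is precisely $D'I'_s=I'_{s+1}D'$. Applying both operator identities to a fixed $f\in C^1(\Omega,\cl_{0,n}')$ then produces the stated pointwise formulas.

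The step I expect to require the most care is not the algebra but the analytic status of the operators: $I'_s=\Psi_{\underline{x}}I_s\Psi_{\underline{x}}^{-1}$ and $E'_s$ are built from the Sheffer intertwiner $\Psi_{\underline{x}}$, which is introduced on $\BR[\underline{x}]$, so one must ensure that $I'_sE'_s=E'_sI'_s={\bf id}$ and the cancellations above persist on the full space $C^1(\Omega,\cl_{0,n}')$ and not merely on polynomials. I would dispose of this by observing that, since $\Psi_{\underline{x}}$ intertwines $(\partial_{x_j},x_j)$ with $(O_{x_j},x_j')$ by (\ref{intertwining}), one has $D'=\Psi_{\underline{x}}D\Psi_{\underline{x}}^{-1}$, $E'_s=\Psi_{\underline{x}}E_s\Psi_{\underline{x}}^{-1}$ and $I'_s=\Psi_{\underline{x}}I_s\Psi_{\underline{x}}^{-1}$; the two identities of (\ref{eq3.1}) then reduce, by conjugation, to the classical relations $DE_s=E_{s+1}D$ and $DI_s=I_{s+1}D$ (the case $O_{x_j}=\partial_{x_j}$), which hold componentwise on $C^1$ functions as in \cite{MR02} via Lemma~\ref{le:4.1}. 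This route both confirms the commutator computation and transfers it to the required function space.
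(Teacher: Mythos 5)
Your proposal is correct and follows essentially the same route as the paper: it establishes $[D',E']=D'$ from the Heisenberg--Weyl relations $[O_{x_j},x_k']=\delta_{jk}{\bf id}$ and $[O_{x_j},O_{x_k}]=0$, deduces $D'E'_s=E'_{s+1}D'$, and then obtains $D'I'_s=I'_{s+1}D'$ by sandwiching with the inverses $I'_s=(E'_s)^{-1}$, exactly as in the paper's computation $D'I'_s=I'_{s+1}E'_{s+1}D'I'_s=I'_{s+1}D'E'_sI'_s=I'_{s+1}D'$. Your closing remark on transferring the identities from polynomials to $C^1$ via the intertwiner $\Psi_{\underline{x}}$ and Lemma~\ref{le:4.1} is a reasonable supplement consistent with the paper's own observation that Lemma~\ref{le:4.1} holds componentwise in the umbral setting.
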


\begin{proof} Recall that $[O_{x_j}, x'_k]=\delta_{jk}{\bf id}$ and $[O_{x_j}, O_{x_k}]=0$, i.e.,
$$O_{x_j} x'_k=\delta_{jk}{\bf id}+ x'_kO_{x_j},  \qquad O_{x_j}O_{x_k}=O_{x_k}O_{x_j}.$$
By definition
$$D'=\sum_{j=1}^n e_j O_{x_j}, \qquad E'=\sum_{k=1}^n x_k'
O_{x_k},$$ so that
$$\begin{array}{rcl}
D'E'&=&\sum_{j, k=1}^n e_j O_{x_j} x'_k O_{x_k}
\\
&=&\sum_{j, k=1}^n e_j (\delta_{jk}{\bf id}+ x'_kO_{x_j}) O_{x_k}
\\
&=&\sum_{j, k=1}^n \delta_{jk}e_j  O_{x_k}+ \sum_{j, k=1}^n e_j
 x'_kO_{x_j} O_{x_k}
\\
&=&\sum_{j}^n e_j  O_{x_j}+ \sum_{j, k=1}^n
 x'_kO_{x_k}  e_j O_{x_j}
\\&=&D'+E'D'
\\&=&E'_1 D'.
\end{array}
$$
With the same approach, we can also show that $x'D'+D'x'=-2E'_{n/2}$
and $E_{s}'x'=x'E_{s+1}'$.

Hence $D'E'_s=D'(s{\bf id}+E')=sD'+E_{1}^{'} D'=E_{s+1} D'$. Since
$E'_s={I'}_s^{-1}$, we  have
$$D' I'_s =I'_{s+1}E_
{s+1}' D' I'_s=I'_{s+1}D'E'_sI'_s= I'_{s+1} D'.$$
\end{proof}

As direct consequence of (\ref{eq3.1}), we find that $I'_s f$ and
$E'_s f$ are monogenic, whenever $f$ is Umbral monogenic. From the
definition of $Q_k'$ in (\ref{eq1.21}), we thus obtain
 \begin{equation}\label{eq3.11}
 Q_k' H_1=H_1.
 \end{equation}

\begin{lemma}
Let $\Omega$ be a starlike domain in $\BR^n$. For any monogenic
function $f$ in $\Omega$,
$$ (D')^k (x')^k Q_k' f(x)= f(x), \quad x\in \Omega.$$
\end{lemma}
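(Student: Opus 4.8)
The plan is to reduce the identity to an explicit evaluation of $(D')^k(x')^k$ on the space of Umbral monogenic functions, and then to invert the resulting operator using that $I'_s$ is the two-sided inverse of $E'_s$. Concretely, I claim that for every Umbral monogenic $f$ (so $D'f=0$) one has
$$(D')^k(x')^kf = c_k\,E'_{n/2}\,E'_{n/2+1}\cdots E'_{n/2+[\frac{k-1}{2}]}\,f,$$
with $c_k=(-2)^k[k/2]!$ exactly as in (\ref{eq1.21}). Granting this, the Lemma is immediate: by the remark following Lemma \ref{le:51}, $I'_s$ preserves monogenicity, so $Q_k'f$ is again monogenic; applying the displayed formula to $Q_k'f$ and using $E'_sI'_s={\bf id}$ together with the fact that all $E'_s=s\,{\bf id}+E'$ (and hence their inverses $I'_s$) mutually commute, the product telescopes pairwise and the constant $c_k$ cancels against the $1/c_k$ in $Q_k'$, leaving $f$.

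The engine for the displayed formula is a pair of parity-dependent reduction identities, valid for monogenic $f$:
$$D'(x')^{2l}f = -2l\,(x')^{2l-1}f, \qquad D'(x')^{2l+1}f = -2\,(x')^{2l}\,E'_{n/2+l}f.$$
I would prove these by a single joint induction on the exponent, writing $D'(x')^{m+1} = (-2E'_{n/2}-x'D')(x')^m$ via the relation $x'D'+D'x'=-2E'_{n/2}$ established in the proof of Lemma \ref{le:51}, then pushing $E'_{n/2}$ through the powers of $x'$ with $E'_sx'=x'E'_{s+1}$ and collapsing the resulting differences of shifted Euler operators with $E'_{s+1}-E'_s={\bf id}$. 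The even-to-odd step produces the factor $E'_{n/2+2l}-l\,{\bf id}=E'_{n/2+l}$, while the odd-to-even step produces $E'_{n/2+2l+1}-E'_{n/2+l}=(l+1)\,{\bf id}$, and these are exactly what feed the coefficients.

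With the reduction identities in hand the displayed formula follows by induction on $k$: writing $(D')^k(x')^kf=(D')^{k-1}(D'(x')^kf)$ and substituting the appropriate identity turns the problem into $(D')^{k-1}(x')^{k-1}$ applied to $f$ when $k$ is even, and to the monogenic function $E'_{n/2+l}f$ when $k=2l+1$ is odd (again using that $E'_s$ preserves monogenicity). One then matches the recursions $c_{2l}=-2l\,c_{2l-1}$ and $c_{2l+1}=-2c_{2l}$ against $c_k=(-2)^k[k/2]!$, and checks that the $E'_s$-product gains exactly one new factor $E'_{n/2+l}$ precisely when $k$ passes from even to odd. The main obstacle I anticipate is purely bookkeeping: keeping the even/odd cases, the shift indices on the operators $E'_s$, and the combinatorial constants $c_k$ perfectly aligned, since the whole statement hinges on the factorial $[k/2]!$ emerging from the alternating pattern of coefficients $-2l$ and $-2$.
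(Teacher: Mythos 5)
Your proposal is correct and follows essentially the same route as the paper: the same reduction identities $D'(x')^{2l}f=-2l(x')^{2l-1}f$ and $D'(x')^{2l+1}f=-2(x')^{2l}E'_{n/2+l}f$ (proved by induction from $x'D'+D'x'=-2E'_{n/2}$ and $E'_sx'=x'E'_{s+1}$), the same closed form $(D')^k(x')^kf=c_kE'_{n/2}\cdots E'_{n/2+[\frac{k-1}{2}]}f$, and the same cancellation against $Q_k'$ via $E'_sI'_s={\bf id}$. The only differences are organizational (one $D'$ at a time versus two, and applying the closed form to $Q_k'f$ rather than writing $f=(Q_k')^{-1}g$), and your bookkeeping of the constants and shift indices checks out.
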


\begin{proof}
From (\ref{eq1.21}) and Lemma \ref{le:51}, we know that
\begin{equation}\label{eq3.111}
(Q_k')^{-1}=c_k E'_{\frac{n}{2}+[\frac{k-1}{2}]}\cdots
E'_{\frac{n}{2}+1} E'_{\frac{n}{2}}. \end{equation}

Denote $g= Q_k' f$. From  (\ref{eq3.11}) and  (\ref{eq3.111}),
 $g$ is Umbral monogenic in $\Omega$ and
$$f(x)=c_k E'_{\frac{n}{2}+[\frac{k-1}{2}]}\cdots
E'_{\frac{n}{2}+1} E'_{\frac{n}{2}} g(x).$$

We need to show
\begin{equation}\label{eq3.12}
(D')^k (x')^k g(x)=c_k E'_{\frac{n}{2}+[\frac{k-1}{2}]}\cdots
E'_{\frac{n}{2}+1} E'_{\frac{n}{2}} g(x),
\end{equation}
 or rather
\begin{equation}\label{eq3.19}
\begin{array}{rcl}
(D')^{2m-1}(x'^{2m-1}) g(x)&=& -(m-1)! 2^{2m-1}
E'_{\frac{n}{2}+m-1}\cdots E'_{\frac{n}{2}+1} E'_{\frac{n}{2}} g(x),
   \\
(D')^{2m}(x'^{2m}) g(x) &=& m! 2^{2m}  E'_{\frac{n}{2}+m-1}\cdots
E'_{\frac{n}{2}+1} E'_{\frac{n}{2}} g(x)
\end{array}
\end{equation}
for any $g$ Umbral monogenic in $\Omega$ and $m\in\BN$.

We use induction to prove (\ref{eq3.19}). Since
$x'D'+D'x'=-2E'_{n/2}$ and $Dg(x)=0$, we have
\begin{equation}\label{eq3.29}
D'(x' g(x))=-2 E'_{\frac{n}{2}} g(x).
\end{equation}

Next we show  that, for any $x\in\Omega$ and $m\in\BN$,

\begin{equation}\label{eq3.39}
\begin{array}{rcl} D' ((x')^{2m} g(x)) &=& -2m (x')^{2m-1} g(x);
\\
D' ((x')^{2m-1} g(x)) &=& -2 (x')^{2(m-1)}  E_{\frac{n}{2}+m-1}' g(x).
\end{array}
\end{equation}
 This can be checked by induction. Assuming that (\ref{eq3.39})
 holds for $m$. we shall now prove it also holds for $m+1$.
We now apply the operator  $x'D'+D'x'=-2E'_{n/2}$ to the function
$(x')^{2m}g(x)$:
\begin{equation}\label{eq3.191}
x'D'((x')^{2m}g(x))+D'x'((x')^{2m}g(x))=-2E'_{n/2}((x')^{2m}g(x)).
\end{equation}
By the hypothesis of induction, the first term in the left is equal
to $-2m(x')^{2m}g(x)$, while the left side equals
$-2x'^{2m}E'_{n/2+2m} g(x)$ due to the fact that
$$E'_{s}x'=x'E'_{s+1}.$$
As a result,
\begin{equation}\label{eq3.392}
\begin{array}{rcl} D'(x'^{2m+1}g(x))&=&-x'D'(x'^{2m}g(x))-2E'_{n/2}(x'^{2m}g(x))
\\&=&2mx'^{2m}g(x)-2 x'^{2m} E'_{n/2+2m} g(x)
\\&=&-2 x'^{2m} E'_{n/2+m} g(x).
\end{array}
\end{equation}
This proves the second equality of (\ref{eq3.39}). The first
equality of (\ref{eq3.39}) can be proved similarly. This proves the
identities (\ref{eq3.39}).

Finally we apply (\ref{eq3.39}) to prove (\ref{eq3.19}). It is easy
to prove when $m=1$. For the general case, from (\ref{eq3.39}) we
have
$$
\begin{array}{rcl}
(D')^{2m} ( (x')^{2m} g(x)) &=& (D')^{2m-1} D'((x')^{2m} g(x))
\\
&=&  -2m (D')^{2(m-1)} ( (x')^{2m-1} g(x))
\end{array}
$$
and
$$
\begin{array}{rcl}
(D')^{2m-1} ( (x')^{2m-1} g(x)) &=& (D')^{2(m-1)-1} D'(D'((x')^{2m-1} g(x))
)
\\
&=&  (D')^{2(m-1)-1} D'(-2 (x')^{2(m-1)} E'_{\frac{n}{2}+m-1} g(x))
\\
&=&  4(m-1) (D')^{2(m-1)-1}  ( (x')^{2(m-1)-1} E'_{\frac{n}{2}+m-1} g(x)
).
\end{array}
$$
(\ref{eq3.19}) follows directly from the above induced formulas
and the assumption of induction.
\end{proof}

Now we come to the proof of our main theorem.

\begin{proof}[of Theorem \ref{th:01}]
Recall $H_k=\{f\in C^\infty(\Omega, \cl_{0, n}): (D')^k f=0 \}$. It is
sufficient to show that
$$
H_k=H_{k-1}+T_{k-1} H_1, \quad k\in\BN,
$$
where $T_k=(x')^k$. Notice that Lemma 3.3 states that
\begin{equation}\label{eq3.3}
 (D')^k T_k Q_k'={\bf id}.
\end{equation}

We divide the proof into two parts.

\begin{enumerate}

\item[(i)] \ $H_k\supset H_{k-1}+T_{k-1} H_1$.

Since $H_{k-1}\subset H_k$, we need only to show $T'_{k-1}
H_1\subset H_k$. For any $g\in H_1$, by (\ref{eq3.3}) and
(\ref{eq3.11})  we have
\begin{eqnarray*}
 (D')^k (T_{k-1}g) = D' ((D')^{k-1} T_{k-1} Q_{k-1} ) (Q_{k-1}')^{-1} g
= D' (Q_{k-1}')^{-1} g=0.
 \end{eqnarray*}

\item[(ii)]
\ $H_k\subset H_{k-1}+T_{k-1} H_1$.

For any $f\in H_k$, we have the decomposition
$$f=({\bf id}-T'_{k-1} Q_{k-1}' D'^{k-1} )f + T_{k-1} (Q_{k-1}' (D')^{k-1} f).$$
We will show that the first summand above is in $H_{k-1}$ and the item in
the braces of the second summand is  in $H_1$. This can be verified
directly. First,
\begin{eqnarray*}
 (D')^{k-1}({\bf id}-T_{k-1} Q_{k-1} (D')^{k-1} )f&=&
((D')^{k-1}- ((D')^{k-1} T_{k-1} Q_{k-1}') (D')^{k-1} )f \\
&=& ((D')^{k-1}-  (D')^{k-1} )f=0.
 \end{eqnarray*}
Next, since $(D')^{k-1}f\in H_1$ and $Q_{k-1}' H_1\subset H_1$, we
have $ Q_{k-1}' (D')^{k-1} f\in H_1$, as desired.

\end{enumerate}

\noindent This proves that $H_k=H_{k-1}+T_{k-1} H_1$. By induction,
we can easily deduce that $H_k=H_1+ T_1H_1+\cdots+ T_{k-1} H_1$.

\medskip
Next we prove that for any $f\in H_k$ the decomposition
$$ f= g+T_{k-1}f_k, \quad g\in H_{k-1}, f_k\in H_1$$
is unique. In fact, for such a decomposition, applying $(D')^{k-1}$
on both sides we obtain
\begin{eqnarray*}
(D')^{k-1} f &=& (D')^{k-1} g+(D')^{k-1} T_{k-1} f_k
\\
&=& (D')^{k-1} T_{k-1} Q_{k-1}' (Q_{k-1}')^{-1}  f_1
\\
&=& (Q_{k-1}')^{-1} f_k.
\end{eqnarray*}
Therefore
$$ f_k=Q_{k-1}'(D')^{k-1} f,$$
so that
$$g=f-T_{k-1}f_k=({\bf id}-T_{k-1}Q_{k-1}'(D')^{k-1}) f.$$
Thus (\ref{eq1.31}) follows by induction.

To prove the converse, we see from (\ref{eq3.12}) that, for any
$k\in \BN$,
 $(D')^{k+1} (x')^k H_1=0.$
Replacing $k$ by $j$,  we have
$$
(D')^{k} (x')^j H_1=0$$ for any $k>j$.
 \end{proof}

\section{The Quantum Harmonic Oscillator}

In this section, for a set of creation and annihilation operators $\a_j^+$ and $\a_j^-$, respectively, we shall decompose the Hamiltonian of the classical harmonic oscillator $$\mathcal{H}=\frac{1}{2}\sum_{j=1}^n -(\a_j^-)^2+(\a_j^+)^2$$
into two first order operators $D_+$ and $D_-$ .
 The operators $D_+$ and $D_-$ constitute a
 pair of the same roles as a vector operator involving position and a momentum operators, respectively. We thus can
establish the Almansi decomposition with respect to this
pair similar to our main result in the Almansi decomposition for the
ordinary pair.

\subsection{Classical Hamiltonian splitting and Umbral Calculus}\label{ClassicalHamiltonian}

We first consider the classical case.  We set the Hamiltonian
operator $\mathcal{H}=\frac{1}{2}(-\Delta+|\underline{x}|^2)$ and put
$\a_j^{\pm}=\frac{1}{\sqrt{2}}(x_j \mp \partial_{x_j})$.

As the operators $\a_j^\pm$ satisfy the Heisenberg-Weyl commutation
relations (\ref{weylh}), we can split $\mathcal{H}$ as
$$ \mathcal{H}=E^{+-}+\frac{n}{2}{\bf id},$$

where $E^{+-}:=\sum_{j=1}^n \a_j^+ \a_j^-$ is Hamiltonian of a field
of free (non-interacting) bosons. This operator plays the same role
of the Umbral Euler operator (\ref{umbralE}).

 For $D=\sum_{j=1}^n \e_j \partial_{x_j}$ and
$x=\sum_{j=1}^n \e_j x_j$, where $\e_1,\e_2,\ldots,\e_n$ are the
standard Clifford generators satisfying the anti-commuting relations
$$ \{\e_j,\e_k\}=-2\delta_{jk}{\bf id},$$
we set $D_{\pm}=\frac{1}{\sqrt{2}}(x \mp D)=\sum_{j=1}^n \e_j
\a_j^\mp$. Hence the operators $D_\pm$ and $\mathcal{H}$ satisfy the following relations
\begin{eqnarray}
\label{DH1}\{ D_+, D_-\}=-2\mathcal{H}, &
[H,D_-]=-D_-, &
[H,D_+]=D_+.
\end{eqnarray}
Using the above relations one may easily calculate the following commutators
\begin{eqnarray}
\label{DH2}\left[ D_+^2, D_-^2\right]=4\mathcal{H}, &
\left[\mathcal{H},D_-^2\right]=-2D_-^2, &
\left[\mathcal{H},D_+^2\right]=2D_+^2, \\
\label{DH3}\left[ D_+^2, D_+\right]=0, &
\left[D_-^2,D_+\right]=2D_-, &
\left[\mathcal{H},D_+^2\right]=2D_+, \\
\label{DH4} &\left[D_-,D_+^2\right]=-2D_+. &
\end{eqnarray}
From the relations (\ref{DH2}), we conclude that $\frac{1}{2}D_+^2$, $\frac{1}{2}D_-^2$ and $\mathcal{H}$ are the
canonical generators of the Lie algebra $sl_2(\BR)$ and hence we have a
representation of harmonic analysis for the classical Harmonic
oscillator (see e.g. \cite{HT92}). On the other hand, there is such a kind of
Clifford Analysis framework involving the operators $D_+$ and $D_-$
since
$$\mbox{span}\left\{\frac{1}{2}D_-^2,\frac{1}{2}D_+^2,\mathcal{H}\right\}\oplus
\mbox{span}\left\{D_-,D_+\right\}$$ equipped with the standard
graded commutator $\left[ \cdot, \cdot\right]$ is a isomorphic to a Lie superalgebra
of type $osp(1|2)$ (see e.g. \cite{FSS00}).

\section{Almansi-type decomposition for the Umbral Harmonic Oscillator}

Let us consider the umbral counterpart of the Hamiltonian written in the language of Umbral Clifford Analysis corresponds to
$$\mathcal{H}'=\frac{1}{2}\left(~(D')^2-(x')^2~\right).$$

where $D'$ and $x'$ are Umbral Dirac and the Umbral coordinate variable defined in equations (\ref{dirac}) and (\ref{x'}), respectively. Next we put
\begin{eqnarray}
\label{Dpm}D_\pm':= \frac{1}{\sqrt{2}}(x' \mp D')
\end{eqnarray}

The representations of the Lie algebra $sl_2(\BR)$ and the Lie
superalgebra $osp(1|2)$ also fulfil in the umbral setting. Indeed,
if we consider the Sheffer operator $\Psi_{\underline{x}}$ defined
in (\ref{eq:1.41}), the umbral operators
$$\frac{1}{\sqrt{2}}(x_j'
\mp O_{x_j})=\Psi_{\underline{x}}\a_j^{\pm}\Psi_{\underline{x}}^{-1}$$ satisfy the Heisenberg-Weyl commutation relations
(\ref{weylh}).

Moreover since $D_{\pm}'=\Psi_{\underline{x}}D_\pm\Psi_{\underline{x}}^{-1}$ and $\mathcal{H}'=\Psi_{\underline{x}}\mathcal{H}\Psi_{\underline{x}}^{-1}$, the
relations (\ref{DH1})-(\ref{DH4}) also fulfils for the operators $D_{\pm}'$, $(D_\pm')^2$ and $\mathcal{H}'$. Now we are in conditions to formulate the Almansi-type decomposition for the Umbral Harmonic Oscillator.

Here the main step consists in replace the Umbral Dirac operator
 $D'$ by $D_-'$ and the umbral coordinate vector variable $x'$ by $D_+'$.

 We take the definition of $\Psi_x$,
$I_s'$, $Q_k'$ as in (\ref{eq:1.41}), (\ref{eq1.12}) and
(\ref{eq1.21}) by replacing the operators $x_j'$, $E_{s}'$ and $I_{s}'$ by $\a_j^+$, $(s-\frac{n}{2}){\bf id}+\mathcal{H}'$ and $((s-\frac{n}{2}){\bf id}+\mathcal{H}')^{-1}$, respectively.

Hence we can deduce that if $D_-'g(x)=0$
then
$$D_-'( (D_+')^{2m}g(x))=-2m( D_+')^{2m-1}g(x),\qquad D_-'( (D_+')^{2m-1}g(x))=-2(
D_+')^{2(m-1)}g(x);$$ Moreover
$$D_-'^{k}( (D_+')^{k}g(x))=g(x).$$

Let $\Omega$ be a domain in $\BR^n$ and $k\in \BN$ and let
$H_k^-(\Omega)$ denote the set of all  functions $f:
\Omega\longrightarrow \cl_{0, n}$ such that $(D_-')^kf=0$. With the
same proof as in Theorem \ref{th:01}, we have the following Almansi
decomposition in terms the eigenvectors of the Umbral Hamiltonian $\mathcal{H}'$.

\begin{theorem} \label{harAlmansi}
Let $\Omega$ be a starlike domain in $\BR^n$ with center $0$. If
$f\in H^-(\Omega)$, then there exist unique functions $f_1, \ldots,
f_k$, each in $H_1^-(\Omega)$, such that
 \begin{equation}\label{eq1.309}
 f(x)=f_1(x)+ D_+'f_2(x) + \cdots + (D_+')^{k-1}f_k(x).
\end{equation}
 Moreover
$f_1, \ldots, f_k$ are given by the the following formulas:

\begin{equation}\label{eq1.319}
\begin{array}{rcr}
 f_k &=& Q_{k-1}' (D'_-)^{k-1} f(x)
   \\
f_{k-1}&=& Q_{k-2}' (D'_-)^{k-2}({\bf id}-(D_+')^{k-1}Q_{k-1}'
(D'_-)^{k-1}) f(x)
    \\
& \vdots
 \\
f_2 &=& Q_1'D'_-({\bf id}-(D_+')^2Q_2'(D'_-)^2)\cdots({\bf
id}-(D_+')^{k-1}Q_{k-1}' (D'_-)^{k-1})f(x)
  \\
f_1 &=& ({\bf id}-D_+' Q_1' D'_-)({\bf
id}-(D_+')^2Q_2'(D'_-)^2)\cdots({\bf id}-(D_+')^{k-1}Q_{k-1}' (D'_-)^{k-1})
f(x)\rlap.
\end{array}
\end{equation}

Conversely, the sum in  (\ref{eq1.309}), with $f_1, \ldots, f_k\in
H_1^-(\Omega)$, defines a function in $H_k^-(\Omega)$.
\end{theorem}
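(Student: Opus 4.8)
The plan is to reduce Theorem \ref{harAlmansi} to Theorem \ref{th:01} by a conjugation argument, exploiting the Sheffer operator $\Psi_{\underline{x}}$ as an intertwiner. The key observation, already supplied in the text preceding the statement, is that $D_{\pm}'=\Psi_{\underline{x}}D_{\pm}\Psi_{\underline{x}}^{-1}$ and $\mathcal{H}'=\Psi_{\underline{x}}\mathcal{H}\Psi_{\underline{x}}^{-1}$, and that the pair $(D_-',D_+')$ satisfies exactly the same $osp(1|2)$-type relations (\ref{DH1})--(\ref{DH4}) that the pair $(D',x')$ satisfies. Thus the whole machinery built for the ordinary Umbral Dirac operator transfers verbatim once one checks that $D_-'$ plays the role of $D'$ and $D_+'$ plays the role of $x'$.

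First I would verify the algebraic dictionary. From (\ref{DH1}) we read off $\{D_+',D_-'\}=-2\mathcal{H}'$, which is the analogue of $x'D'+D'x'=-2E'_{n/2}$, so $\mathcal{H}'$ takes the place of $E'_{n/2}$. Accordingly I redefine the auxiliary operators $E_s'$, $I_s'$ and $Q_k'$ by substituting $D_+'$ for $x_j'$ and $(s-\tfrac{n}{2})\mathbf{id}+\mathcal{H}'$ for $E_s'$, exactly as instructed in the paragraph before the theorem. Second I would re-derive the two core recursion identities: the analogue of Lemma \ref{le:51}, namely $D_-'E_s'=E_{s+1}'D_-'$ and $D_-'I_s'=I_{s+1}'D_-'$, and the analogue of Lemma~3.3, namely $(D_-')^k(D_+')^kQ_k'=\mathbf{id}$ on the kernel $H_1^-$. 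Both follow by applying $\Psi_{\underline{x}}$ and $\Psi_{\underline{x}}^{-1}$ to the corresponding identities already proved for $D'$ and $x'$, since conjugation by $\Psi_{\underline{x}}$ is an algebra isomorphism that carries each ingredient to its primed-minus counterpart. The lowering relations $D_-'((D_+')^{2m}g)=-2m(D_+')^{2m-1}g$ and $D_-'((D_+')^{2m-1}g)=-2(D_+')^{2(m-1)}\mathcal{H}'\,g$ for $g\in H_1^-$, displayed just above the statement, are precisely the images under $\Psi_{\underline{x}}$ of (\ref{eq3.39}).

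With these identities in hand, the decomposition $H_k^-=H_{k-1}^-+ (D_+')^{k-1}H_1^-$ and the explicit formulas (\ref{eq1.319}) are obtained by repeating the two-part inclusion argument and the uniqueness computation in the proof of Theorem \ref{th:01}, with every $D'$ replaced by $D_-'$ and every $x'=T_1$ replaced by $D_+'$. The existence direction uses $(D_-')^k(D_+')^kQ_k'=\mathbf{id}$; the uniqueness direction applies $(D_-')^{k-1}$ to the decomposition and reads off $f_k=Q_{k-1}'(D_-')^{k-1}f$; induction on $k$ then yields the full expansion. The converse, that any such sum lies in $H_k^-$, follows from $(D_-')^k(D_+')^jH_1^-=0$ for $k>j$, which is the conjugated form of (\ref{eq3.19}).

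The main obstacle I anticipate is not conceptual but a matter of bookkeeping: one must confirm that the commutator relations (\ref{DH2})--(\ref{DH4}) genuinely reproduce, term for term, every commutator used in the proof of Theorem \ref{th:01}, in particular the shift relation $E_s'x'=x'E_{s+1}'$ whose analogue is $\mathcal{H}'D_+'=D_+'(\mathcal{H}'+\mathbf{id})$, equivalently $[\mathcal{H}',D_+']=D_+'$ from (\ref{DH1}). Since $\Psi_{\underline{x}}$ preserves the Heisenberg--Weyl relations and hence the entire $osp(1|2)$ structure, this transfer is automatic, but the verification that no relation specific to $E'_{n/2}$ (beyond the $osp(1|2)$ brackets) was used in Section~3 is the one place where genuine care is required rather than mechanical substitution.
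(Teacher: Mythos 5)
Your proposal matches the paper's own argument: the paper likewise sets up the dictionary $D'\mapsto D_-'$, $x'\mapsto D_+'$, $E_s'\mapsto (s-\tfrac{n}{2})\mathbf{id}+\mathcal{H}'$, records the same lowering identities $D_-'((D_+')^{2m}g)=-2m(D_+')^{2m-1}g$ and $(D_-')^k(D_+')^kg=g$ for $g\in H_1^-$, and then simply declares that the proof of Theorem \ref{th:01} carries over verbatim. The one caveat --- which you already flag yourself --- is that the transfer rests on the $osp(1|2)$ relations alone rather than on conjugating $(D',x')$ by $\Psi_{\underline{x}}$ (conjugation carries $(D_\pm,\mathcal{H})$ to $(D_\pm',\mathcal{H}')$, not $(D',x')$ to $(D_-',D_+')$), but this is precisely the bookkeeping the paper also leaves implicit.
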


\section{Umbral Bridge} \label{umbralBridge}

We can now show that the powerful "Umbral bridge" connects
continuity and discreteness.

First we consider the continuous case. Namely,  we take $O_{x_j}$ to
be the usual continuous derivative $\partial_{x_j}$. Then the
Pincherle derivative $(\partial_{x_j})'=[\partial_{x_j}, x_j]={\bf id}$, $D'$ is exactly
the classical Dirac operator, so that the resulting Umbral Clifford
algebra turns out to be the usual Clifford algebra $\cl_{0,n}$ and hence our
main theorem then recoveries the Almansi decomposition as in
\cite{MR02}.

It is now natural to take a look for applications of the umbral calculus formalism in problems related to lattices. Our main objective in this section is therefore to find a Almansi-type decomposition for the gauged version of the Dirac Operator and for the quantum harmonic oscillator.

First we will explain how to recover the Almansi-type decomposition in the discrete case.
\begin{example}[Discrete counterpart of Almansi Decomposition]\label{discreteAlmansi}

Our starting point is to consider the difference Dirac operators $D_{h}^{\pm}$ introduced in \cite{FK07}.
In the terminology of that paper, $$D_h^{\pm}=\sum_{j=1}^n \e_j\partial_h^{\pm j}$$ are the forward/backward versions of the Dirac operator, where $\partial_h^{\pm j}$ introduced in Example \ref{example1} and $\e_1,\e_2,\ldots,\e_n$ being the standard basis for the Clifford Algebra $\cl_{0,n}$.

By introducing the Sheffer maps $$\Psi_{\underline{x}}^{\pm}: \underline{x}^\alpha \mapsto (\underline{x})^{(\alpha)}_{\mp},$$ the Almansi-type decomposition for the operators can be obtained by replacing the standard Dirac $D=\sum_{j=1}^n \e_j \partial_{x_j}$ and coordinate variable operators $x=\sum_{j=1}^n\e_j x_j$ by its discrete counterparts
\begin{eqnarray}
D_h^{\pm}=\Psi_{\underline{x}}^{\pm}D (\Psi_{\underline{x}}^{\pm})^{-1}, & \mbox{and}~~
x^{\pm}=\Psi_{\underline{x}}^{\pm}x(\Psi_{\underline{x}}^{\pm})^{-1}=\sum_{j=1}^n\e_j ~x_j\tau_{\mp h \vv_j},
\end{eqnarray}
respectively (see \cite{MR02}).

\end{example}

A further consequence, we recover the Fischer Decomposition obtained in \cite{FK07}. If
 $\Pi_k$ denote the space of all Clifford-Valued polynomials homogeneous of degree $k$, and $\Pi$ corresponds to the graded algebra $$\Pi=\bigoplus_{k=0}^\infty \Pi_k,$$ the set
 $$ \Pi^{\mp}=\Psi_{\underline{x}}^{\pm}\Pi_k=\left\{ f \in \Pi : E_h^{\mp}f=kf\right\}.$$
corresponds to the spaces of Clifford-valued polynomials generated by the polynomials $(\underline{x})_{\mp}^{(\alpha)}$ of degree $|\alpha|=k$, where $E^{\mp}_h=\sum_{j=1}^n x_j \partial^{\mp j}_h$ stands the diference Euler operators introduced in \cite{FK07}.

Then the Fischer Decomposition for the Difference Dirac Operators $D_h^\pm$ corresponds to
\begin{eqnarray}
\label{FischerDecomp}\Pi^{\mp}_k=\bigoplus_{j=0}^k (x^{\pm})^j\mathcal{M}^{\mp}_{k-j}, & \mbox{where}~\mathcal{M}^{\mp}_{k-j}:=\Pi^{\mp}_k \cap \ker D_h^{\pm}.
\end{eqnarray}

We have to stress that the Fischer Decomposition obtained above does not allow to obtain a refinement of the discrete spherical harmonics (i.e., the null solutions of the star Laplacian/D'Alembertian (\ref{starLapl}) ) in terms of null solutions of $D_h^\pm$. Now we will turn our attention for the discrete case described in Example \ref{discreteClifford}.

\begin{example}[Refinement of discrete spherical harmonics]
We shall now consider the central difference Dirac operator acting in lattices with mesh-size $h/2$, i.e.
$$D_{h/2}:=\frac{1}{2}\left(D_{h/2}^+ + D_{h/2}^-\right).$$

In this way, taking the central finite differences and the resulting inverse for the Pincherle derivative described in Example \ref{example2}, we build the Sheffer map
\begin{eqnarray}
\label{ShefferMap}\Psi_{\underline{x}}: \underline{x}^\alpha \mapsto \prod_{j=1}^n (x_j')^{\alpha_j}{\bf 1},
 \end{eqnarray}
 {\it viz} the symmetrized formula (\ref{xj'2}) for the operators $x_j'$ and hence, we obtain like in Example \ref{discreteAlmansi} the Almansi-type decomposition as well as the Fischer decomposition for the central difference Dirac operator $D_{h/2}$.
The main advantage of the described approach we split the star Laplacian operator as the square of $D_{h/2}$.

Now we can construct the discrete counterpart of the spherical harmonics into the null solutions of $D_{h/2}$. This is essentially obtained in the following way.

Namely, if $f$ is harmonic on the unit ball $\mathbb{B}:=\{x ~:~ -x^2 \leq 1\},$ there exists a unique decomposition (see \cite{MR02}, page 1545)
\begin{eqnarray}
\label{sphericalH} f(x)=f_1(x)+xf_0(x)
\end{eqnarray}
where $f_1$ and $f_0$ are monogenic in $\mathbb{B}$. Moreover, $f_1$ and $f_0$ are explicitly given by
\begin{eqnarray*}
f_1(x)=({\bf id}+x I_{n/2}D)f(x), & f_0(x)=-I_{n/2}D f(x)
\end{eqnarray*}
Since the Sheffer map is an algebra isomorphism, we obtain the decomposition of discrete harmonic functions on the discrete ball $$\Psi_{\underline{x}}\mathbb{B}:=\left\{x_{h/2}=\Psi_{\underline{x}}x ~:~ -\left(x_{h/2}\right)^2{\bf 1} \leq 1\right\},$$ by applying the Sheffer map $\Psi_{\underline{x}}$ on both sides of (\ref{sphericalH}).
Indeed we have
\begin{eqnarray}
\Psi_{\underline{x}}f(x)=\Psi_{\underline{x}}f_1(x)+x_{h/2}\Psi_{\underline{x}}f_0(x).
\end{eqnarray}
Moreover if we restrict the above decomposition to the discrete sphere
$\partial (\Psi_{\underline{x}}\mathbb{B})$, $\Psi_{\underline{x}}f(x) \in \Psi_{\underline{x}}\Pi_k$ is the discrete counterpart of the $k-$spherical harmonic polynomial $f(x)$ while $\Psi_{\underline{x}}f_1(x) \in \Psi_{\underline{x}}\Pi_k$ is the discrete counterpart of the $k-$spherical monogenic polynomial $f_1(\underline{x})$.
\end{example}

From the definition, we can also see that contrary to the operators $D_{h}^{\pm}$, the central difference Dirac operator $D_{h/2}$ is not a `local operator' since the shift operators $\tau_{\pm \frac{h}{2}\vv_j}$ when acting on some lattice functions, they not only concern with the nearest neighbor points but also all the points contained in the direction $h \vv_j$. So we can call the combination $D_{h/2}$ a `quasi-local' operator. The quasi-local is the price for getting the above mentioned nice description of discrete spherical monogenics.

The next important example concerns the Gauged version of Harmonic Oscillator written in terms of the operators described in the above example.

\begin{example}[The gauged version of the Quantum Harmonic Oscillator]
Now we consider the Gauged version of the Quantum Harmonic Oscillator.

$$\mathcal{H}_h=\Psi_{\underline{x}}\mathcal{H}\Psi_{\underline{x}}^{-1}=\frac{1}{2}\left(D_{h/2}\right)^2-\frac{1}{2}\left(x_{h/2}\right)^2.$$

This local operator is defined on the uniform lattice with mesh-width $h$ and corresponds to a splitting in terms of `quasi-local' operators.

In order to describe the eigenfunctions of the gauged Hamiltonian, we turn now our attention to the continuous harmonic oscillator described in subsection \ref{ClassicalHamiltonian}.

In the continuum, a straightforward computation shows that $$V_\alpha(\underline{x})=\exp(-|\underline{x}|^2)H_\alpha(\underline{x}),$$ are eigenfunctions of $\mathcal{H}$ with corresponding eigenvalue $\lambda=|\alpha|+\frac{n}{2}$, where $H_\alpha(\underline{x})$ stands the classical multivariate Hermite polynomials.

For these eigenfunctions, we also obtain following raising and lowering properties for the set of operators $\a_j^\pm:$
\begin{eqnarray}
\a_j^+ V_\alpha(\underline{x})= V_{\alpha+\vv_j}(\underline{x}), & \a_j^- V_\alpha(\underline{x})= \alpha_jV_{\alpha-\vv_j}(\underline{x}).
\end{eqnarray}
Moreover, the exponential generating function for $V_\alpha(\underline{x})$ is given by
$$
V(\underline{x},\underline{t})=\sum_{\alpha}\frac{V_\alpha(\underline{x})}{\alpha!}\underline{t}^\alpha
=\exp\left(-\frac{1}{2}\left(|\underline{x}|^2+|\underline{t}|^2\right)+\sqrt{2}\underline{x}\cdot \underline{t}\right).
$$
The corresponding eigenfunctions and exponential generating function for the gauged Hamiltonian $\mathcal{H}_h$ are then obtained viz the Sheffer map (\ref{ShefferMap}). These are closely related with the Kravchuk polynomials of discrete variable introduced in \cite{ML01}.

Furthermore for the spaces of Clifford valued polynomials generated by $\Psi_{\underline{x}}V_\alpha(\underline{x})$ we obtain the following version of the Fischer Decomposition
\begin{eqnarray}
\label{FischerDecompGauge}\bigoplus_{j=0}^k (x_{h/2})^{j}\Psi_{\underline{x}}{H}_{k-j}^+(\Omega),
\end{eqnarray}
where $\Psi_{\underline{x}}H_{k-j}^+(\Omega):=\left\{ f \in \Pi: \mathcal{H}_h f=\left(k-j+\frac{n}{2}\right)f \right\}\cap \ker ~(D_{h/2})_-$.
\end{example}

In the last example, the Fischer decomposition results as a direct consequence of the Howe dual pair technique (see \cite{Howe85}).

Moreover, the following theorem appears as a generalization of (\ref{FischerDecompGauge}), allow us to split the eigenvectors of the Gauge Hamiltonian $\mathcal{H}_{h}$ in terms of null solutions for the operator $(D_{h/2})_-$.

\begin{theorem}[Almansi-type decomposition for the Gauge Harmonic Oscillator] \label{discreteAlmansi}
Let $\Omega$ be a starlike domain in $\BR^n$ with center $0$. If $f$ satisfies
\begin{eqnarray}
\label{polym}(D_{h/2})_{-}f(x)=0, &\mbox{and}~~\mathcal{H}_{h}f=(k+\frac{n}{2})f(x)
\end{eqnarray}
in $\Omega_{h/2}:=\Omega \cap (h/2) \mathbb{Z}^n$, then
there exist unique functions $f_1, \ldots, f_k \in \ker (D_{h/2})_-$, such that
 \begin{equation}\label{eq1.301}
 f(x)=f_1(x)+ (D_{h/2})_+f_2(x) + \cdots + (D_{h/2})_+^{k-1}f_k(x).
\end{equation}
 Moreover the discrete  monogenic functions
$f_1, \ldots, f_k$ are given by the the following formulas:

\begin{equation}\label{eq1.31}
\begin{array}{rcr}
 f_k &=& Q_{k-1}' (D_{h/2})_-^{k-1} f(x)
   \\
f_{k-1}&=& Q_{k-2}' (D_{h/2})_-^{k-2}({\bf id}-(D_{h/2})_+^{k-1}Q_{k-1}'
(D_{h/2})_-^{k-1}) f(x)
    \\
& \vdots
 \\
f_2 &=& Q_1'(D_{h/2})_-({\bf id}-(D_{h/2})_+^2Q_2(D_{h/2})_-^2)\cdots({\bf
id}-\left(D_{h/2}\right)_+^{k-1}Q_{k-1} (D_{h/2})_-^{k-1})f(x)
  \\
f_1 &=& ({\bf id}-(D_{h/2})_+ Q_1' (D_{h/2})_-)({\bf id}-(D_{h/2})_+^2Q_2'(D_{h/2})_-^2)\cdots({\bf
id}-(D_{h/2})_+^{k-1}Q_{k-1}' D_{h/2})_-^{k-1} f(x)\rlap.
\end{array}
\end{equation}

Conversely, the sum in (\ref{eq1.301}), the functions $f_1, \ldots, f_k$
satisfy the equations (\ref{polym}) in $\Omega_{h/2}$.
\end{theorem}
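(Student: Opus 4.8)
The plan is to realize this statement as the image, under the Sheffer map $\Psi_{\underline{x}}$ of (\ref{ShefferMap}), of the umbral harmonic-oscillator decomposition of Theorem \ref{harAlmansi}. The decisive structural fact is that the gauged operators are conjugates of the continuum ones: by construction $(D_{h/2})_{\pm}=\Psi_{\underline{x}}D_{\pm}\Psi_{\underline{x}}^{-1}$ and $\mathcal{H}_h=\Psi_{\underline{x}}\mathcal{H}\Psi_{\underline{x}}^{-1}$, the map $\Psi_{\underline{x}}$ being the one attached to the central difference operator of Example \ref{example2} through the symmetrized position operators (\ref{xj'2}). Because $\Psi_{\underline{x}}$ is an algebra isomorphism it transports every algebraic identity among $D_\pm,\mathcal{H}$ to the identical identity among $(D_{h/2})_\pm,\mathcal{H}_h$; in particular the $osp(1|2)$ relations (\ref{DH1})--(\ref{DH4}) hold verbatim for the gauged operators. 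These relations are exactly the input on which the inductive proof of Theorem \ref{th:01} rests, under the dictionary $D'\mapsto(D_{h/2})_-$, $x'\mapsto(D_{h/2})_+$, so I intend to replay that proof rather than reprove the algebra.

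Concretely, I read the operative poly-monogenicity hypothesis as $((D_{h/2})_-)^{k}f=0$ (the degree-$k$ analogue of membership in $H_k^-$), the eigenvalue condition $\mathcal{H}_hf=(k+\tfrac n2)f$ being used separately, as explained below. Writing $T_m=((D_{h/2})_+)^m$ and interpreting $E_s'$ as $(s-\tfrac n2){\bf id}+\mathcal{H}_h$, with $I_s'=(E_s')^{-1}$ and $Q_m'$ built from these inverses as in (\ref{eq1.21}), the two facts I must carry over are the analogue of Lemma \ref{le:51}, namely $(D_{h/2})_-E_s'=E_{s+1}'(D_{h/2})_-$ and $(D_{h/2})_-I_s'=I_{s+1}'(D_{h/2})_-$, and the analogue of (\ref{eq3.3}), namely $((D_{h/2})_-)^m T_m Q_m'={\bf id}$ on $\ker(D_{h/2})_-$. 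Both are obtained by conjugating the already-proven continuum identities by $\Psi_{\underline{x}}$. With them in hand the splitting $H_k^-=H_{k-1}^-+T_1 H_1^-$ and the formulas (\ref{eq1.31}) follow from the very same induction as in the proof of Theorem \ref{th:01}: the ansatz $f=({\bf id}-T_{k-1}Q_{k-1}'((D_{h/2})_-)^{k-1})f+T_{k-1}\big(Q_{k-1}'((D_{h/2})_-)^{k-1}f\big)$ separates $f$ into a summand annihilated by $((D_{h/2})_-)^{k-1}$ and a summand $T_{k-1}g$ with $g\in\ker(D_{h/2})_-$, while applying $((D_{h/2})_-)^{k-1}$ to any admissible decomposition forces $f_k=Q_{k-1}'((D_{h/2})_-)^{k-1}f$ and hence uniqueness.

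The genuinely new point, and the one I expect to be the main obstacle, is that on the lattice $\Omega_{h/2}=\Omega\cap(h/2)\mathbb{Z}^n$ the operator $I_s'$ can no longer be the radial integral (\ref{eq1.11}): that integral has no lattice meaning, so $I_s'$ must be defined purely spectrally as $((s-\tfrac n2){\bf id}+\mathcal{H}_h)^{-1}$. This is precisely where the eigenvalue hypothesis is indispensable. On the eigenspace $\ker(\mathcal{H}_h-(k+\tfrac n2){\bf id})$ the operator $E_s'$ acts as the scalar $s+k$, and since $((D_{h/2})_-)^{j}f$ lies in the eigenspace of eigenvalue $k-j+\tfrac n2$ for $0\le j\le k-1$ (because $[\mathcal{H}_h,(D_{h/2})_-]=-(D_{h/2})_-$ lowers the eigenvalue by one), every factor occurring in the definition (\ref{eq1.21}) of $Q_m'$ acts as multiplication by a strictly positive number $s+(k-j)\ge s+1>0$. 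Thus each inverse in (\ref{eq1.21}) exists and is a nonzero scalar, and the explicit formulas are well defined over $\Omega_{h/2}$. What has to be checked carefully is therefore the invertibility bookkeeping: that the finite chain of $\mathcal{H}_h$-eigenvalues swept out by repeated lowering never meets the forbidden value making $E_s'$ singular, and that $(D_{h/2})_\pm$ and the scalar operators $Q_m'$ preserve the lattice eigenspaces; here starlikeness of $\Omega$ ensures that the shifts hidden inside $(D_{h/2})_\pm$ keep every evaluation point inside $\Omega_{h/2}$. The converse is immediate, since $((D_{h/2})_-)^{k}((D_{h/2})_+)^{j}g=0$ for $g\in\ker(D_{h/2})_-$ and $k>j$ by the same raising/lowering computation transported through $\Psi_{\underline{x}}$.
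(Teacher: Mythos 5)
Your proposal follows essentially the same route the paper intends: Theorem \ref{discreteAlmansi} is obtained by transporting Theorem \ref{harAlmansi} (itself a replay of the proof of Theorem \ref{th:01} under the dictionary $D'\mapsto D_-'$, $x'\mapsto D_+'$, $E_s'\mapsto (s-\tfrac{n}{2}){\bf id}+\mathcal{H}'$, $I_s'\mapsto ((s-\tfrac{n}{2}){\bf id}+\mathcal{H}')^{-1}$) through the Sheffer map attached to the central-difference delta operator, and your added bookkeeping on the invertibility of $E_s'$ on the $\mathcal{H}_h$-eigenspaces only makes explicit what the paper leaves implicit. Your reading of the hypothesis as $((D_{h/2})_-)^{k}f=0$ is the sensible repair of what is evidently a misprint in the statement and matches the pattern of $H_k^-(\Omega)$ in Theorem \ref{harAlmansi}.
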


With the above results show the powerful Umbral bridge unifies continuous and discrete.

\section{Concluding Remarks:}

Umbral Calculus is an analysis of correspondence between continuity and discreteness in the commutative field related to polynomials. In the article,  we bring Umbral Calculus into the non-commutative field through the Umbral Clifford analysis and we further consider functions far beyond polynomials. We study the discrete version in the Umbral Clifford analysis by keeping the symmetry of  phenomena with the price of non-commutative. The Almansi decomposition is established in Umbral Clifford analysis, which concerns the theory of the powers of Laplacian. In particular, it gives the Fisher decomposition of polynomials, which is a start pointing for the spherical harmonics and special function theory as a whole.  One can further study the theory of Fourier transformations and wavelets.  Recently the theory of power operators of Laplacian has found their applications  in the conformal Geometry in pure mathematics (see \cite{HS01}).

As a  framework of the unification of continuity and discreteness, the strong power of this  framework lies at its constructive isomorphism. This makes it possible to construct the associated  equations and solutions in the discrete setting starting from the equations and their solutions in the continuous setting in Physics. One may  further bring this technique to the field of Geometry by studying the fruitful equations in the continuous setting through the associated discrete version.

\sloppy

\end{document}